
\documentclass{amsart}
\usepackage[T1]{fontenc}   

\usepackage[latin1]{inputenc}
\usepackage[english]{babel}
\usepackage[T1]{fontenc}
\usepackage{amsmath,amsthm,amsfonts,amssymb,booktabs,latexsym}
\usepackage{bbm}

\usepackage{graphicx}
\usepackage{epsf}
\usepackage{amssymb}
\usepackage{times}
\usepackage{latexsym,amscd}
\usepackage{graphicx}
\usepackage{epsf}
\usepackage{amssymb}
\usepackage{times}
\usepackage{latexsym,amscd,amsmath,amsthm,amssymb}

\usepackage{mathrsfs} 


\setcounter{page}{1}                   
\setlength{\textwidth}{4.4in}          
\setlength{\textheight}{7.0in}         
\setlength{\evensidemargin}{1in}       
\setlength{\oddsidemargin}{1in}        
\setlength{\topmargin}{.8in}           

\newtheorem{theorem}{Theorem}[section]
\newtheorem{lemma}[theorem]{Lemma}
\newtheorem{proposition}[theorem]{Proposition}
\theoremstyle{definition}

\numberwithin{equation}{section}

\theoremstyle{definition}
\newtheorem{definition}[theorem]{Definition}
\newtheorem{example}[theorem]{Example}

\newtheorem{corollary}[theorem]{Corollary}

\theoremstyle{remark}

\newcommand {\naturals} {\mathbb{N}}

\newcommand {\F} {\mathcal{F}}

\newcommand {\G} {\mathcal{G}}
\newcommand {\U} {\mathcal{U}}
\newcommand {\V} {\mathcal{V}}
\newcommand {\la} {\leftarrow}

\begin{document}

\noindent                                             
\begin{picture}(150,36)                               
\put(5,20){\tiny{Submitted to}}                       
\put(5,7){\textbf{Topology Proceedings}}              
\put(0,0){\framebox(140,34){}}                        
\put(2,2){\framebox(136,30){}}                        
\end{picture}                                        
\vspace{0.5in}

\renewcommand{\bf}{\bfseries}
\renewcommand{\sc}{\scshape}
\vspace{0.5in}

\title[H-closed Spaces and H-sets in the Convergence Setting]%
{H-closed Spaces and H-sets\\ in the Convergence Setting}

\author{John P Reynolds}
\address{Department of Mathematics University of Kansas;
Lawrence, Kansas 66045}
\email{jreynolds@ku.edu}


\subjclass[2010]{Primary 54A05, 54A20, 54D25; Secondary 54C20}

\keywords{H-closed, H-set, Pretopological Space}
\thanks {}

\begin{abstract}
We use convergence theory as the framework for studying H-closed spaces and H-sets in topological spaces.  From this viewpoint, it becomes clear that the property of being H-closed and the property of being an H-set in a topological space are pretopological notions.  Additionally, we define a version of H-closedness for pretopological spaces and investigate the properties of such a space.
\end{abstract}

\maketitle

\section{\bf Introduction}
The early development of general topology was guided in part by the desire to develop a framework in which to discuss different notions of convergence found in analysis.  Starting with M. Fr\'{e}chet \cite{Frechet} and continuing with E. \v{C}ech \cite{Cech} and F. Hausdorff, different notions of convergence informed the axiomatizations of topological spaces and closure spaces. In 1948, G. Choquet \cite{Choquet} laid out the theory of {\it convergence spaces}, general enough to contain the classes of topological spaces and closure spaces while unifying the desired notions of convergence.  

Once an agreed-upon definition of topological space was achieved, the concept of compactness revealed itself to be deserving of much study and subsequently of generalization.  One of the most fruitful generalizations of compactness is that of an {\it H-closed space}, defined in \cite{Alexandroff Urysohn} by P. Alexandroff and P. Urysohn in 1928. One particular advantage of considering H-closed spaces is that, in contrast with compact spaces, every Hausdorff topological space can be densely embedded in an H-closed space. Much later, in 1968, N. V. Veli\v{c}ko \cite{Velicko} relativized H-closedness to subspaces by defining the {\it H-sets} of a space $X$.  In this same paper, Veli\v{c}ko gives us the tools needed to consider H-closedness and H-sets as purely convergence-theoretic properties.   In \cite{Dickman Porter 1975} R.F. Dickman and J. Porter use these tools to define the particular convergence we will use to discuss H-closed spaces and H-sets in the convergence setting.

Our first task here will be to place H-closed spaces and H-sets in the convergence theoretic framework. In section 2, we give preliminary definitions and results pertaining to H-closed spaces and H-sets in the usual topological setting.  This is followed in section 3 by the basic definitions and results necessary to consider the convergence theoretic point of view.  Particularly of interest will be the definition of pretopological spaces, which is the subcategory of convergence spaces in which we will mainly work.  At this point we will frame H-closed spaces and H-sets as pretopological notions.  In particular, theorem 3.10 points to the potential advantages of this point of view.

In section 4, we define a purely convergence-theoretic notion which parallels that of H-closedness for topological spaces.  The basic properties of the so-called {\it PHC spaces} (short for pretopologically H-closed spaces) are investigated.  Additionally, we develop a technique for constructing new PHC spaces using images of compact pretopological spaces.  

Lastly, we will discuss convergence-theoretic extensions of convergence spaces.  Much work has been done in this area, in particular by D.C. Kent and G.D. Richardson, who catalogued much of the early progress in the field in \cite{Kent Richardson 1979}.  We investiagate PHC extensions of a pretopological space $X$.  These extensions prove to be of interest in that for any pretopological space $X$, there is a PHC extension of $X$ which is projectively larger than any compactification of $X$.  This is not true of compactifications, as a convergence space $X$ does not in general have a largest compactification. 

\section{\bf H-closed Spaces and H-sets}

A Hausdorff topological space is {\it H-closed} if it is closed in every Hausdorff topological space in which it is embedded.  The following well-known characterizations of H-closed spaces are useful and will be used interchangably as the definition of H-closed.

\begin{theorem}
Let $X$ be a Hausdorff topological space.  The following are equivalent.
\begin{enumerate}
\item $X$ is H-closed,
\item Whenever $\mathcal{C}$ is an open cover of $X$, there exist $C_1, ..., C_n \in \mathcal{C}$ such that $X = \bigcup_{i=1}^n cl_X C_i$,
\item Every open filter on $X$ has nonempty adherence,
\item Every open ultrafilter on $X$ has a convergence point.
\end{enumerate}
\end{theorem}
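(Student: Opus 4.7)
My plan is to establish the four equivalences cyclically, proving $(1)\Rightarrow(2)\Rightarrow(3)\Rightarrow(4)\Rightarrow(1)$, which avoids duplicating the underlying filter/cover duality.

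For $(1)\Rightarrow(2)$ I would argue by contrapositive. Suppose $\mathcal{C}$ is an open cover of $X$ such that no finite subfamily has closures covering $X$. Then the collection
\[
\mathcal{F}=\bigl\{X\setminus\bigcup_{i=1}^n cl_X C_i : n\in\naturals,\ C_1,\dots,C_n\in\mathcal{C}\bigr\}
\]
is a base of open sets closed under finite intersections, each member nonempty. I would extend $X$ by one new point $p$, declaring a base of open neighborhoods of $p$ to be the sets $\{p\}\cup F$ with $F\in\mathcal{F}$, and leaving the topology on $X$ unchanged. The verification that $Y=X\cup\{p\}$ is Hausdorff is the one non-trivial step: given $x\in X$, choose $C\in\mathcal{C}$ with $x\in C$, and then $C$ and $\{p\}\cup(X\setminus cl_X C)$ are disjoint open sets separating $x$ from $p$. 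Since $p\in cl_Y X\setminus X$, this contradicts H-closedness.

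For $(2)\Rightarrow(3)$ I would again contrapose. If $\mathcal{F}$ is an open filter with $\bigcap_{F\in\mathcal{F}} cl_X F=\emptyset$, then $\{X\setminus cl_X F:F\in\mathcal{F}\}$ is an open cover of $X$. Applying (2) yields $F_1,\dots,F_n\in\mathcal{F}$ with $X=\bigcup_i cl_X(X\setminus cl_X F_i)$. The key observation is that each open set $F_i$ is disjoint from $X\setminus cl_X F_i$, hence disjoint from $cl_X(X\setminus cl_X F_i)$ as well, so the nonempty intersection $F_1\cap\cdots\cap F_n$ misses every piece of the cover, a contradiction.

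The step $(3)\Rightarrow(4)$ is immediate: an open ultrafilter is in particular an open filter, so by (3) has nonempty adherence, and a standard filter-theoretic argument shows that adherent points of an open ultrafilter are convergence points. For $(4)\Rightarrow(1)$, suppose $X$ embeds in a Hausdorff space $Y$ and let $y\in cl_Y X$. The trace on $X$ of the open neighborhood filter of $y$ is an open filter on $X$; enlarge it to an open ultrafilter $\mathcal{U}$, which by (4) converges in $X$ to some point $x$. Viewed as a filter on $Y$, $\mathcal{U}$ refines both the neighborhood filter of $y$ (since neighborhoods of $y$ in $Y$ trace into $\mathcal{U}$) and the neighborhood filter of $x$ in $Y$ (since $Y$-open neighborhoods of $x$ trace to $X$-open neighborhoods). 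Hausdorffness of $Y$ forces $x=y$, so $y\in X$ and $X$ is closed in $Y$.

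The main obstacle I anticipate is the one-point extension in $(1)\Rightarrow(2)$: verifying that the proposed topology on $Y$ is Hausdorff and that $\mathcal{F}$ genuinely has the finite intersection property requires the assumption on $\mathcal{C}$ to be used carefully. The remaining implications are essentially applications of the standard duality between open covers, open filters, and open ultrafilters.
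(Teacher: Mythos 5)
The paper states Theorem 2.1 as a well-known characterization and gives no proof of its own, so there is nothing to compare against; judged on its own terms, your cyclic argument $(1)\Rightarrow(2)\Rightarrow(3)\Rightarrow(4)\Rightarrow(1)$ is correct and is the standard proof. The one-point Hausdorff extension in $(1)\Rightarrow(2)$ and the disjointness argument in $(2)\Rightarrow(3)$ are carried out correctly, and the two facts you cite as standard --- that an adherent point of an open ultrafilter is a convergence point (via the maximality property that an open set meeting every member of an open ultrafilter must belong to it), and that the trace filter of a point of $cl_Y X$ extends to an open ultrafilter on $X$ --- are exactly the lemmas needed and are routine to supply.
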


Velicko \cite{Velicko} relativized the concept of H-closed to subspaces in the following way: If $X$ is a Hausdorff topological space and $A \subseteq X$, we say that $A$ is an {\it H-set} if whenever $\mathcal{C}$ is a cover of $A$ by open subsets of $X$, there exist $C_1,...,C_n \in \mathcal{C}$ such that $A \subseteq \bigcup_{i=1}^n cl_X C_i$.  We say that a filter $\mathcal{F}$ {\it meets} a set $A$ if $F \cap A \not= \varnothing$ for each $F \in \mathcal{F}$.  If $\mathcal{F}$ meets $A$ we will sometimes write $\mathcal{F} \# A$.  We note the following well-known characterizations of H-sets which mirror the above theorem.

\begin{proposition}
Let $X$ be a topological space and $A \subseteq X$.  The following are equivalent.
\begin{enumerate}
\item $A$ is an H-set in $X$,
\item If $\mathcal{F}$ is an open filter on $X$ which meets $A$, then $adh_X \mathcal{F} \cap A \not= \varnothing$,
\item If $\mathcal{U}$ is an open ultrafilter on $X$ which meets $A$, then $adh_X \U \cap A \not= \varnothing$.
\end{enumerate}
\end{proposition}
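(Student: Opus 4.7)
My plan is to prove the chain $(1)\Rightarrow(2)\Rightarrow(3)\Rightarrow(1)$.

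For $(1)\Rightarrow(2)$, I argue by contrapositive. Suppose $\mathcal{F}$ is an open filter on $X$ meeting $A$ with $adh_X\mathcal{F}\cap A=\varnothing$. For each $a\in A$, since $a\notin adh_X\mathcal{F}$, choose an open neighborhood $V_a$ of $a$ and some $F_a\in\mathcal{F}$ with $V_a\cap F_a=\varnothing$. Then $\{V_a:a\in A\}$ is an open cover of $A$, and (1) supplies $a_1,\ldots,a_n$ with $A\subseteq cl_XV_{a_1}\cup\cdots\cup cl_XV_{a_n}$. Set $F=F_{a_1}\cap\cdots\cap F_{a_n}\in\mathcal{F}$; since $\mathcal{F}$ meets $A$, pick $x\in F\cap A$ and let $i$ be such that $x\in cl_XV_{a_i}$. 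Openness of $F$ makes it an open neighborhood of $x$, so $F\cap V_{a_i}\neq\varnothing$, contradicting $F\subseteq F_{a_i}$ together with $F_{a_i}\cap V_{a_i}=\varnothing$.

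The step $(2)\Rightarrow(3)$ is immediate, since every open ultrafilter is an open filter.

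For $(3)\Rightarrow(1)$, I again argue by contrapositive: let $\mathcal{C}$ be an open cover of $A$ such that no finite subfamily $C_1,\ldots,C_n$ satisfies $A\subseteq cl_XC_1\cup\cdots\cup cl_XC_n$. The family
\[
\mathcal{B}=\bigl\{X\setminus(cl_XC_1\cup\cdots\cup cl_XC_n):C_1,\ldots,C_n\in\mathcal{C}\bigr\}
\]
is an open filter base, and by the failure of (1) every member of $\mathcal{B}$ meets $A$. I extend $\mathcal{B}$ to an open ultrafilter $\mathcal{U}$ still meeting $A$, by applying Zorn's lemma to the poset of open filters containing $\mathcal{B}$ every member of which meets $A$, and then checking that the maximal such filter is in fact an open ultrafilter. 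Given this $\mathcal{U}$, for any $a\in A$ I select $C\in\mathcal{C}$ with $a\in C$; since $C$ is open, $a\in C\subseteq int_X(cl_XC)$, so $a\notin cl_X(X\setminus cl_XC)$. Because $X\setminus cl_XC\in\mathcal{B}\subseteq\mathcal{U}$, this gives $a\notin adh_X\mathcal{U}$, whence $adh_X\mathcal{U}\cap A=\varnothing$, contradicting (3).

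The principal obstacle is the Zorn step in $(3)\Rightarrow(1)$: one must verify that the maximal open filter meeting $A$ extending $\mathcal{B}$ is genuinely an open ultrafilter, i.e.\ maximal among all open filters on $X$. The standard device is to exploit the trace of $\mathcal{U}$ on $A$, which is an ultrafilter on $A$ by maximality, and to use it to show that any open $V\subseteq X$ compatible with every member of $\mathcal{U}$ must already lie in $\mathcal{U}$. The remaining bookkeeping is straightforward once this point is settled.
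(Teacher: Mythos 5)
Your arguments for $(1)\Rightarrow(2)$ and $(2)\Rightarrow(3)$ are correct (the paper states this proposition without proof, so the comparison is with the standard argument rather than with anything in the text). The trouble is $(3)\Rightarrow(1)$, and the obstacle you flag at the end is not bookkeeping but a genuine failure. A filter maximal among open filters containing $\mathcal{B}$ and meeting $A$ need not be an open ultrafilter: if $V$ is open and meets every member of such a maximal filter $\mathcal{U}$, the open filter generated by $\mathcal{U}\cup\{V\}$ may simply fail to meet $A$, so maximality in your poset tells you nothing about $V$. Your proposed repair via the trace ultrafilter $\mathcal{W}$ on $A$ breaks at exactly the same point: from $A\setminus V\in\mathcal{W}$ you cannot manufacture an \emph{open} member of $\mathcal{U}$ disjoint from $V$, since $A\setminus V$ need not contain a nonempty open set.

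In fact no argument can close this gap with ``meets'' read as the paper defines it ($U\cap A\neq\varnothing$ for every $U\in\mathcal{U}$). Every open ultrafilter contains every dense open set $D$: if $D\notin\mathcal{U}$, maximality forces some $U\in\mathcal{U}$ with $U\cap D=\varnothing$, impossible for $U$ nonempty open and $D$ dense. Take $X=\mathbb{R}$ and $A=\mathbb{Z}$: the set $\mathbb{R}\setminus\mathbb{Z}$ is dense open, so it lies in every open ultrafilter, hence no open ultrafilter on $\mathbb{R}$ meets $\mathbb{Z}$ and statement $(3)$ holds vacuously; yet $\mathbb{Z}$ is not an H-set, as the cover $\{(n-\tfrac12,n+\tfrac12):n\in\mathbb{Z}\}$ shows. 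So $(3)\Rightarrow(1)$ is unprovable as literally stated; the usable formulation weakens the hypothesis in $(3)$ to ``$cl_X U\cap A\neq\varnothing$ for every $U\in\mathcal{U}$'' (meeting $A$ in the $\theta$-sense), or works with arbitrary filters and $\theta$-adherence as in Theorem 2.4. Note also that your construction of $\mathcal{B}$ already yields $(2)\Rightarrow(1)$ with no ultrafilter at all: the open filter generated by $\mathcal{B}$ meets $A$, and your final computation shows its adherence misses $A$. So the equivalence of $(1)$ and $(2)$ is complete and correct in your write-up; only the ultrafilter clause is at issue.
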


It is important to note that the property of H-closeness is not closed-hereditary.  Also, note that the definition of an H-set is heavily dependent on the ambient space being considered.  In particular, not every H-set is H-closed.  The following example, due to Urysohn, points to this distinction.  Recall that a space $X$ is semiregular if the regular-open subsets of $X$ form an open base.

\begin{example}
Let $X = \mathbb{N} \times \mathbb{Z} \cup \{\pm\infty\}$.  Define $U \subseteq X$ to be open if 
\begin{itemize} 
\item $+\infty \in U$ (resp. $-\infty \in U)$ implies that for some $k \in \naturals, \{(n, m) : n > k, m \in \naturals\} \subseteq U$ (resp. $\{(n,m) : n > k, -m \in \naturals\} \subseteq U)$

\item $(n, 0) \in U$ implies that for some $k \in \naturals$ $\{(n, \pm m) : m > k\} \subseteq U$ 
\end{itemize}

Then $X$ is H-closed and semiregular. Let $A = \{(n, 0) : n \in \naturals\} \cup \{+\infty\}$.  Notice that $A$ is a closed discrete subset of $X$ and that $A$ is an H-set in $X$.  However, with the subspace topology, $A \cong \mathbb{N}$, and thus us not H-closed.  
\end{example}

Both H-closed spaces and H-sets can be characterized in using the {\it $\theta$-closure}, which is also due to Velicko.  If $X$ is a topological space and $A \subseteq X$, then $cl_\theta A = \{x \in X : x \in U \in \tau(X) \text{ implies } cl_X U \cap A \not= \varnothing\}$, is the $\theta$-closure of $A$.  A subset is {\it $\theta$-closed} if it is equal to its $\theta$-closure.  Note that the $\theta$-closure is not a Kuratowski closure operator.  In particular it is not necessarily idempotent.  In Urysohn's example above, let $B = \{(n, m) : n \in \naturals, m > 0\}$.  Then, $cl_\theta B = B \cup \{(n, 0) : n \in \naturals\} \cup \{+\infty\}$.  However, $cl_\theta cl_\theta B = cl_\theta B \cup \{- \infty\}$.

In this respect, $(X, cl_\theta)$ is a closure space in the sese of \v{C}ech \cite{Cech}. We will see in section 3 that this characterizes the $\theta$-closure as a pretopological notion.  For a filter on $X$, define $adh_\theta \F = \bigcap_{F \in \F} cl_\theta F$.     We now give a characterization of H-closed spaces and H-sets in terms of $\theta$-closure.

\begin{theorem} Let $X$ be a Hausdorff topological space and $A \subseteq X$.  Then,
\begin{enumerate}
\item $X$ is H-closed if and only if whenever $\F$ is a filter on $X$, $adh_\theta \F \not=\varnothing$.
\item $A$ is an H-set in $X$ if and only if whenever $\F$ is a filter on $X$ which meets $A$, $adh_\theta \F \cap A \not= \varnothing$.
\end{enumerate}
\end{theorem}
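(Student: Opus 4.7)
The plan is to handle the two parts in parallel by first isolating the interaction between the $\theta$-closure and open sets, and then reducing each direction to the filter characterizations already recorded in Theorem 2.1 and Proposition 2.2.

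First I would establish the observation that $\cl_\theta U = \cl_X U$ for every open $U \subseteq X$. One inclusion is immediate; for the other, if $x \in \cl_\theta U$ and $V$ is an open neighborhood of $x$, then any $y \in \cl_X V \cap U$ has $U$ as an open neighborhood, which forces $U \cap V \neq \varnothing$, and hence $x \in \cl_X U$. An immediate consequence is that $adh_\theta \F = adh_X \F$ whenever $\F$ is an open filter.

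This observation disposes of both $(\Leftarrow)$ implications. Given the hypothesis that every filter (meeting $A$) has nonempty $\theta$-adherence (intersecting $A$), one specializes to an arbitrary open filter (meeting $A$) and invokes the equality above to conclude that such an open filter has nonempty ordinary adherence (intersecting $A$). This is exactly the open-filter criterion of Theorem 2.1(3) (respectively Proposition 2.2(2)), and so yields H-closedness (respectively the H-set property).

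For the $(\Rightarrow)$ implications I would argue by contrapositive. Suppose $\F$ is a filter with $adh_\theta \F \cap A = \varnothing$, where $A = X$ in case (1). For each $x \in A$, the condition $x \notin adh_\theta \F$ supplies an open neighborhood $U_x$ of $x$ and a set $F_x \in \F$ with $\cl_X U_x \cap F_x = \varnothing$. The family $\{U_x : x \in A\}$ open-covers $A$, so H-closedness (respectively the H-set property) produces finitely many points $x_1, \dots, x_n$ with $A \subseteq \bigcup_{i=1}^{n} \cl_X U_{x_i}$. Setting $F = \bigcap_{i=1}^{n} F_{x_i} \in \F$, we find $F \cap A \subseteq \bigcup_i (F_{x_i} \cap \cl_X U_{x_i}) = \varnothing$. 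In case (1) this forces $F = \varnothing$, contradicting that $\F$ is a filter; in case (2) it contradicts $\F \# A$.

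The only genuine subtlety is the lemma $\cl_\theta U = \cl_X U$ for open $U$; once it is in place the rest is bookkeeping. The $(\Rightarrow)$ direction actually requires arbitrary filters, but only ever extracts finitely many sets from $\F$ at a time, so openness plays no role there. The $(\Leftarrow)$ direction, by contrast, reduces to the open-filter criterion precisely through the lemma, so the two halves feel asymmetric even though neither is hard.
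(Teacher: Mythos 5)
Your proof is correct. Note that the paper itself offers no proof of this theorem --- it is stated as a known characterization going back to Veli\v{c}ko --- so there is no argument of the author's to compare yours against; what you have written is the standard (and complete) justification. The one load-bearing step, the identity $cl_\theta U = cl_X U$ for open $U$ (hence $adh_\theta \F = adh_X \F$ for open filters $\F$), is verified correctly, and it is exactly the bridge needed to reduce the backward implications to the open-(ultra)filter criteria of Theorem 2.1(3) and Proposition 2.2(2). The forward implications are likewise sound: from $adh_\theta \F \cap A = \varnothing$ you extract the open cover $\{U_x\}$ with $cl_X U_x \cap F_x = \varnothing$, apply the finite $\theta$-subcover property, and intersect the finitely many $F_{x_i}$ to contradict either the filter axioms (case 1) or $\F \# A$ (case 2). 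Your closing remark about the asymmetry --- arbitrary filters only ever being probed finitely many members at a time in one direction, versus the open-filter reduction in the other --- is an accurate diagnosis of why the equivalence holds at all.
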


Note that in the above theorem, $\F$ is a filter consisting of any subsets of $X$, not an open filter as in theorem 2.1. 

A function between topological spaces, $f:X \to Y$, is called {\it $\theta$-continuous} if whenever $V$ is an open neighborhood of $f(x)$, there exists and open neighborhood $U$ of $x$ such that $f[cl_X U] \subseteq cl_Y V$.  Often the notion of $\theta$-continuity is more useful than that of continuity for Hausdorff, non-regular topological spaces.  For example, for every Hausdorff space $X$, there exists an extremally disconnected, Tychonoff space $EX$, called the absolute of $X$, and a perfect, irreducible, $\theta$-continuous map $k_X : EX \to X$.  More, the absolute of $X$ is unique in a sense.  For a full treatment of absolutes, see \cite{Porter Woods}. 

\section{\bf Convergence Spaces}

For a basic reference on convergence theory, see \cite{Dolecki 2008}.  Given a relation $\xi$ between filters on $X$ and elements of $X$, we write either $\F \to_\xi x$ or $x \in \lim_\xi \F$ whenever $(\F, x) \in \xi$. If $A \subseteq X$, let $\langle A \rangle$ be the principal filter generated by $A$.  We abbreviate $\langle \{x\} \rangle$ by $\langle x \rangle$. A {\it convergence space} is a set $X$ paired with a relation $\xi$ between filters on $X$ and points of $X$ satisfying
\begin{enumerate}
\item $\langle x \rangle \to_\xi x$, and
\item if $\F \subseteq \G$ and $\F \to_\xi x$, then $\G \to_\xi x$.
\end{enumerate}

Clearly, a topological space paired with the usual topological notion of convergence in which $\F \to x$ if and only if $\mathcal{N}(x) \subseteq \F$ is an example of a convergence space.  The class of convergence structures on a set $X$ can be given a lattice structure.  We say that $\tau$ is {\it coarser than} $\xi$, written $\tau \leq \xi$ if $lim_\tau \F \supseteq \lim_\xi \F$ for each filter $\F$ on $X$.  In this case we also say that $\xi$ is finer than $\tau$.

\begin{example}
Throughout this paper, if $X$ is a topological space, let $\theta_X$  be the convergence on $X$ defined by $\F \to_{\theta_X} x$ if and only if $cl_X U \in \F$ for each open neighborhood $U$ of $x$.  If only one topological space $X$ is being considered, we will drop the subscript on $\theta$.  This type of convergence was studied extensively under the name ``almost convergence'' in \cite{Dickman Porter 1975}.  We will frequently come back to this example of a convergence space.  
\end{example}

Two filters $\F$ and $\G$ meet if $F \cap G \not= \varnothing$ for each $F \in \F$ and $G \in \G$, in which case we write $\F \# \G$.  Given a filter $\F$ on a convergence space $(X, \xi)$, the {\it adherence of $\F$} is defined to be \[adh_\xi \F = \bigcup \{\lim_\xi \G : \G \# \F\}.\]  For $A \subseteq X$, we write $adh_\xi A$ to abbreviate $adh_\xi \langle A \rangle$.  We will also define the {\it inherence} of a set $A$ by \[inh_\xi A = X \setminus adh_\xi (X \setminus A).\]  These two concepts will function as generalized versions of topological closure and interior for convergence spaces.

A convergence $\xi$ is {\it Hausdorff} if every filter has at most one limit point.

Topological spaces are now seen is a particular instance of a convergence space.  In fact, if $(X, \tau)$ is a topological space, then $adh_\tau A = cl_X A$ for any $A \subseteq X$ and $adh_\tau \F = \bigcap_{F \in \F} cl_X F$.  Two other important classes of convergence spaces are {\it pseudotopologies} and {\it pretopologies}.  If $\F$ is a filter on $X$, let $\beta \F$ denote the set of all ultrafilters on $X$ containing $\F$.  A convergence $\xi$ is a pseudotopology if $\lim_\xi \F \supseteq \bigcap \{\lim_\xi \mathcal{U} : \U \in \beta \F\}$. In \cite{Herrlich}, Herrlich, Lowen-Colebunders and Schwatz discuss the categorical advantages of working in the category of pseudotopological spaces.  We will discuss the usefulness of working with pretopological spaces to characterize H-closed space and H-sets in the next subsection.

A convergence space $(X, \xi)$ is {\it compact} if every filter on $X$ has nonempty adherence.  The following notions of compactness for filters will allow us to get at compactness of subspaces.

\begin{definition} Let $(X, \xi)$ be a convergence space, $\F$ a filter on $X$ and $A \subseteq X$.  We say that $\F$ is {\it compact at $A$} if whenever $\G$ is a filter on $X$ and $\G \# \F$, $adh_\xi \G \cap A \not= \varnothing$.  In particular, a filter $\F$ is {\it relatively compact} if $\F$ is compact at $X$.  

If $\mathcal{B}$ is a family of subsets of $X$, then $\F$ is {\it compact at $\mathcal{B}$} if whenever $\G \# \F$, $adh_\xi \G \# \mathcal{B}$.  A filter is {\it compact} if $\F$ is compact at itself.
\end{definition}

Using this definition, $A \subseteq X$ is {\it compact} if whenever $\G$ is a filter on $X$ which meets $A$, we have that $adh_\xi \G \cap A \not= \varnothing$.  Notice that for topological spaces this also characterizes the compact subspaces. 

Let $(X, \xi)$ and $(Y, \tau)$ be convergence spaces.  A function $f:(X, \xi) \to (Y, \tau)$ is {\it continuous} if $f[\lim_\xi \F] \subseteq \lim_\tau f(\F)$ for each filter $\F$ on $X$, where $f(\F)$ is the filter generated by $\{f[F] : F \in \F\}$.  Notice that if $X$ and $Y$ are topological space, then $f:X \to Y$ is $\theta$-continuous if and only if $f:(X, \theta_X) \to (Y, \theta_Y)$ is continuous in the sense of convergence spaces.

 Given $A \subseteq X$ and a convergence $\xi$ on $X$, we can define the {\it subconvergence} on $A$ as follows:  If $\F$ is a filter on $A$, let $\hat{\F}$ be the filter on $X$ generated by $\F$.  Define $\lim_{\xi|_A} \F = \lim_\xi \hat{\F} \cap A$.  This is also the initial convergence of $A$ generated by the inclusion map $i:A \to (X, \xi)$; that is, the coarsest convergence making the inclusion map continuous.  Thus, $A$ is a compact subset of $(X, \xi)$ is equivlent to $(A, \xi|_A)$ is a compact convergence space.

\subsection{Pretopologies, H-closed Spaces and H-sets}

For each $x \in X$, the {\it vicinity filter at $x$}, $\mathcal{V}_\xi(x)$ is defined to be $\bigcap \{\F : x \in \lim_\xi \F\}$. A convergence $\xi$ on $X$ is a {\it pretopology} if $\mathcal{V}_\xi(x) \to_\xi x$ for each $x \in X$.  We take a moment to gather several well-known facts and definitions pertaining to pretopological spaces here:

\begin{proposition} If $(X, \pi)$ is a pretopological space, then the adherence operator satisfies each of the following
\begin{enumerate}
\item $adh_\pi \varnothing = \varnothing$,
\item $A \subseteq adh_\pi A$ for each $A \subseteq X$,
\item $adh_\pi (A \cup B) = adh_\pi A \cup adh_\pi B$ for any $A, B \subseteq X$.
\end{enumerate}

Additionally, $U \in \mathcal{V}_\pi(x)$ if and only if $x \in inh_\pi U$ and $x \in adh_\pi \F$ if and only if $\mathcal{V}_\pi(x) \# \F$.
\end{proposition}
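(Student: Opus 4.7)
The plan is to establish the two equivalences in the ``Additionally'' clause first, because the vicinity--adherence characterization is the main technical fact and trivializes property (3); items (1) and (2) then reduce to direct consequences of the definitions and the basic convergence axioms.

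First I would prove the equivalence $x \in adh_\pi \F$ iff $\V_\pi(x) \# \F$. The forward direction needs only the definition of the vicinity filter and holds in any convergence space: if $\G \# \F$ and $x \in \lim_\pi \G$, then by definition $\V_\pi(x) \subseteq \G$, so $\V_\pi(x) \# \F$. The reverse direction is the one step where the pretopology hypothesis is essential. Given $\V_\pi(x) \# \F$, let $\G$ be the filter generated by the filter base $\{V \cap F : V \in \V_\pi(x),\, F \in \F\}$. Then $\V_\pi(x) \subseteq \G$ and $\F \subseteq \G$, so $\G \# \F$; since $\V_\pi(x) \to_\pi x$ by the pretopology axiom, the upward-closure axiom of a convergence space forces $\G \to_\pi x$, and hence $x \in adh_\pi \F$. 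The inherence--vicinity duality then falls out by unwinding the definition of $inh_\pi$: the condition $x \in inh_\pi U$ means $x \notin adh_\pi(X \setminus U)$, which by the equivalence just proved is the same as the existence of some $V \in \V_\pi(x)$ disjoint from $X \setminus U$, equivalently with $V \subseteq U$, equivalently $U \in \V_\pi(x)$ since $\V_\pi(x)$ is a filter.

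For (1), no filter meets the empty set, so the union defining $adh_\pi \varnothing$ is empty. For (2), given $x \in A$, the principal filter $\langle x \rangle$ converges to $x$ by the first convergence axiom and meets $\langle A \rangle$, so $x \in adh_\pi A$. For (3), monotonicity (built from the definition of adherence) gives $adh_\pi A \cup adh_\pi B \subseteq adh_\pi(A \cup B)$. For the reverse, suppose $x \notin adh_\pi A \cup adh_\pi B$; by the vicinity characterization there are $V_A, V_B \in \V_\pi(x)$ with $V_A \cap A = V_B \cap B = \varnothing$, and then $V_A \cap V_B \in \V_\pi(x)$ is disjoint from $A \cup B$, so $x \notin adh_\pi(A \cup B)$. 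The main (and really the only) obstacle is organizing the vicinity--adherence equivalence so that the pretopology hypothesis is visibly deployed exactly in its reverse direction; once that is done, (3) is a one-line argument and would otherwise require a more delicate manipulation of meeting filters.
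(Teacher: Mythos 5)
Your proof is correct, and since the paper states this proposition without proof (as a collection of well-known facts about pretopologies), there is nothing to diverge from: your derivation is the standard one, with the pretopology axiom $\mathcal{V}_\pi(x) \to_\pi x$ correctly isolated as the only place it is needed (the reverse direction of $x \in adh_\pi \F \iff \mathcal{V}_\pi(x) \# \F$). All the remaining steps check out against the paper's definitions of $adh$, $inh$, and the vicinity filter.
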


In particular, this proposition shows that the categories of \v{C}ech closure spaces and pretopological spaces are equivalent.

\begin{proposition} If $(x, \pi)$ is a pretopological space, then $X$ is Hausdorff if and only if whenever $x_1$, $x_2 \in X$ and $x_1 \not= x_2$, there exists $U_i \in \mathcal{V}_\pi(x_i)$ $(i = 1,2)$ such that $U_1 \cap U_2 = \varnothing$.\end{proposition}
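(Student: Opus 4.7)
The plan is to prove both directions by exploiting the defining property of pretopologies, namely that the vicinity filter $\mathcal{V}_\pi(x)$ is itself convergent to $x$. This is the feature that makes the pretopological setting behave like the classical topological one for separation arguments.

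For the forward direction, I would argue by contraposition. Suppose no such disjoint pair of vicinities exists for some distinct $x_1, x_2 \in X$. Then for every $U_1 \in \mathcal{V}_\pi(x_1)$ and $U_2 \in \mathcal{V}_\pi(x_2)$ we have $U_1 \cap U_2 \neq \varnothing$, i.e.\ $\mathcal{V}_\pi(x_1) \# \mathcal{V}_\pi(x_2)$. Hence the collection $\{U_1 \cap U_2 : U_i \in \mathcal{V}_\pi(x_i)\}$ is a filter base; let $\F$ be the filter it generates. Then $\F \supseteq \mathcal{V}_\pi(x_i)$ for $i=1,2$, so by the pretopology axiom $\mathcal{V}_\pi(x_i) \to_\pi x_i$ together with the second convergence axiom (finer filters still converge), we get $\F \to_\pi x_1$ and $\F \to_\pi x_2$, contradicting Hausdorffness.

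For the converse, suppose the vicinity-separation condition holds and assume toward a contradiction that some filter $\F$ satisfies $\F \to_\pi x_1$ and $\F \to_\pi x_2$ with $x_1 \neq x_2$. Because $\mathcal{V}_\pi(x_i)$ is defined as the intersection of all filters converging to $x_i$, any filter that converges to $x_i$ must contain $\mathcal{V}_\pi(x_i)$; in particular $\F \supseteq \mathcal{V}_\pi(x_1) \cup \mathcal{V}_\pi(x_2)$. Pick disjoint $U_i \in \mathcal{V}_\pi(x_i)$ as guaranteed by hypothesis. Then $U_1, U_2 \in \F$, so $\varnothing = U_1 \cap U_2 \in \F$, contradicting that $\F$ is a (proper) filter.

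There is no real obstacle here; the only thing to handle carefully is the bookkeeping involving the definition of $\mathcal{V}_\pi(x)$ as an intersection of convergent filters, together with the pretopological axiom that makes this intersection itself convergent. Both directions are one-step arguments once this is recognized, so the write-up should be short.
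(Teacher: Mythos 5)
Your proof is correct: the forward direction properly uses the pretopological axiom $\mathcal{V}_\pi(x)\to_\pi x$ together with isotony to make the supremum filter $\mathcal{V}_\pi(x_1)\vee\mathcal{V}_\pi(x_2)$ converge to both points, and the converse uses only that $\mathcal{V}_\pi(x_i)$ is contained in every filter converging to $x_i$. The paper states this proposition without proof as a well-known fact, and your argument is exactly the standard one that would be supplied.
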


\begin{proposition} Let $f:(X, \pi) \to (Y, \tau)$.  The following are equivalent
\begin{enumerate}
\item $f$ is continuous
\item $f[adh_\pi \F] \subseteq adh_\tau f(\F)$ for each filter $\F$ on $X$
\item $f[adh_\pi A] \subseteq adh_\tau f[A]$ for each $A \subseteq X$
\item $f^\la[inh_\tau B] \subseteq inh_\pi f^\la[B]$ for each $B \subseteq Y$
\item For each $x \in X$, if $V \in \mathcal{V}_\tau(f(x))$, there exists $U \in \mathcal{V}_\pi(x)$ such that $f[U] \subseteq V$.
\end{enumerate}
\end{proposition}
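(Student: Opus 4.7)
The plan is to establish the cycle $(1)\Rightarrow(2)\Rightarrow(3)\Rightarrow(5)\Rightarrow(1)$ together with the separate equivalence $(3)\Leftrightarrow(4)$, relying throughout on Proposition 3.4 (which characterizes membership in $\mathcal{V}_\pi(x)$ via inherence, and adherence of a filter via meeting the vicinity filter).

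For $(1)\Rightarrow(2)$, given a filter $\F$ on $X$ and $y=f(x)\in f[adh_\pi\F]$, there is $\G\#\F$ with $x\in\lim_\pi\G$; continuity gives $f(\G)\to_\tau f(x)$, and since $G\cap F\neq\varnothing$ implies $f[G]\cap f[F]\neq\varnothing$, the filters $f(\G)$ and $f(\F)$ meet, so $f(x)\in adh_\tau f(\F)$. Step $(2)\Rightarrow(3)$ is immediate on specializing to $\F=\langle A\rangle$, since $f(\langle A\rangle)=\langle f[A]\rangle$. For $(5)\Rightarrow(1)$, I use that $\pi$ is a pretopology, so $\F\to_\pi x$ iff $\mathcal{V}_\pi(x)\subseteq\F$; condition (5) is precisely $\mathcal{V}_\tau(f(x))\subseteq f(\mathcal{V}_\pi(x))$, so $f(\F)\supseteq f(\mathcal{V}_\pi(x))\supseteq\mathcal{V}_\tau(f(x))$, giving $f(\F)\to_\tau f(x)$.

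The step that requires the most care, and the one I expect to be the main obstacle, is $(3)\Rightarrow(5)$, since this is where the pretopological hypothesis is genuinely used. Given $x\in X$ and $V\in\mathcal{V}_\tau(f(x))$, set $A=f^\la[Y\setminus V]$. By (3), $f[adh_\pi A]\subseteq adh_\tau f[A]\subseteq adh_\tau(Y\setminus V)$. By Proposition 3.4, $V\in\mathcal{V}_\tau(f(x))$ means $f(x)\in inh_\tau V=Y\setminus adh_\tau(Y\setminus V)$, so $f(x)\notin f[adh_\pi A]$, whence $x\notin adh_\pi A$. A second application of Proposition 3.4 yields $U:=X\setminus A\in\mathcal{V}_\pi(x)$, and by construction $f[U]\subseteq V$.

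Finally, the equivalence $(3)\Leftrightarrow(4)$ is a routine complement manipulation: using $adh_\pi(X\setminus S)=X\setminus inh_\pi S$ on both sides and substituting $C=Y\setminus B$, condition (4) is equivalent to $adh_\pi f^\la[C]\subseteq f^\la[adh_\tau C]$ for every $C\subseteq Y$. Setting $A=f^\la[C]$ derives this from (3), and taking $C=f[A]$ (and using $A\subseteq f^\la[f[A]]$, hence $adh_\pi A\subseteq adh_\pi f^\la[f[A]]$) recovers (3) from it.
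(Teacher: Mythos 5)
Your proof is correct. The paper itself offers no argument for this proposition --- it is listed among the ``well-known facts'' about pretopological spaces --- so there is no official proof to compare against; your cycle $(1)\Rightarrow(2)\Rightarrow(3)\Rightarrow(5)\Rightarrow(1)$ plus the complementation equivalence $(3)\Leftrightarrow(4)$ is a clean and complete verification. The two places where the pretopological hypothesis is genuinely needed are exactly the ones you flag: $(5)\Rightarrow(1)$ uses that $\F\to_\pi x$ iff $\mathcal{V}_\pi(x)\subseteq\F$ (which requires $\mathcal{V}_\pi(x)\to_\pi x$), and $(3)\Rightarrow(5)$ uses both directions of the Proposition 3.4 identification $U\in\mathcal{V}_\pi(x)\iff x\in inh_\pi U$; your choice $A=f^\la[Y\setminus V]$, giving $U=X\setminus A=f^\la[V]$, handles this correctly, including the degenerate case $A=\varnothing$ since $adh_\pi\varnothing=\varnothing$.
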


\begin{definition} A collection $\mathcal{C}$ of subsets of a pretopological space $(X, \pi)$ is a {\it $\pi$-cover} (or simply {\it cover} if there is no possible confusion) if for each $x \in X$, $\mathcal{C} \cap \mathcal{V}_\pi(x) \not= \varnothing$. For $A \subseteq X$, we say that $\mathcal{C}$ is a {\it cover of $A$} if for eac $x \in A$, $\mathcal{C} \cap \mathcal{V}_\pi(x) \not= \varnothing.$ \end{definition}

\begin{proposition} Let $(X, \pi)$ be a pretopological space, $\F$ a filter on $X$ and $A \subseteq X$.  Then $\F$ is compact at $A$ if and only if whenever $\mathcal{C}$ is a cover of $A$, there exists $F \in \F$ and $C_1,...,C_n \in \mathcal{C}$ such that $F \subseteq \bigcup_{i=1}^n C_i$.\end{proposition}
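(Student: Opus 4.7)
The plan is to prove both directions by contradiction, using the pretopological characterization $x \in adh_\pi \G$ if and only if $\mathcal{V}_\pi(x) \# \G$ from Proposition 3.2. This mirrors the classical argument connecting compactness via covers and compactness via filters in topological spaces.

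For the forward direction, suppose $\F$ is compact at $A$ and let $\mathcal{C}$ be a cover of $A$. Assuming the conclusion fails, the family
\[
\mathcal{B} = \bigl\{ F \setminus (C_1 \cup \dots \cup C_n) : F \in \F,\ n \in \naturals,\ C_1,\dots,C_n \in \mathcal{C} \bigr\}
\]
consists of nonempty sets and is closed under finite intersections, so it generates a filter $\G$. Since every $F \in \F$ lies in $\mathcal{B}$ (take no $C_i$'s), we have $\G \supseteq \F$ and in particular $\G \# \F$. By hypothesis, pick $x \in adh_\pi \G \cap A$. Since $\mathcal{C}$ covers $A$, choose $C \in \mathcal{C} \cap \mathcal{V}_\pi(x)$. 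Then Proposition 3.2 gives $\mathcal{V}_\pi(x) \# \G$, so $C$ meets every element of $\G$; but $X \setminus C \in \mathcal{B} \subseteq \G$, which is the desired contradiction.

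For the converse, assume the covering condition and let $\G$ be a filter with $\G \# \F$. If $adh_\pi \G \cap A = \varnothing$, then for each $x \in A$ we have $\mathcal{V}_\pi(x) \not\# \G$ by Proposition 3.2, so we may select $V_x \in \mathcal{V}_\pi(x)$ and $G_x \in \G$ with $V_x \cap G_x = \varnothing$. The family $\mathcal{C} = \{V_x : x \in A\}$ is by construction a cover of $A$, so by hypothesis there exist $F \in \F$ and $x_1,\dots,x_n \in A$ with $F \subseteq V_{x_1} \cup \dots \cup V_{x_n}$. Setting $G = G_{x_1} \cap \dots \cap G_{x_n} \in \G$, one checks that $F \cap G \subseteq \bigcup_i (V_{x_i} \cap G_{x_i}) = \varnothing$, contradicting $\F \# \G$.

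Neither direction presents a substantial obstacle; the only delicate point is verifying that the candidate filter base $\mathcal{B}$ in the forward direction is genuinely a filter base (nonemptiness comes from the contradiction hypothesis, and stability under finite intersections is a routine distribution computation). The key conceptual step, used in both directions, is the identification of ``$x \in adh_\pi \G$'' with ``$\mathcal{V}_\pi(x) \# \G$,'' which is exactly where pretopologicality is used: in a general convergence space the vicinity filter need not converge to $x$, and the corresponding equivalence between covers and compactness of filters can fail.
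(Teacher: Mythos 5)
Your proof is correct. Note that the paper does not actually prove this proposition: it is listed among the ``well-known facts'' about pretopologies, with the surrounding text deferring to Dolecki's survey. Your argument is the standard one and is essentially the same technique the paper \emph{does} write out for the neighboring characterization of cover-compact sets -- in one direction the sets $F\setminus(C_1\cup\dots\cup C_n)$ generate a filter meshing with $\F$ whose adherence misses $A$, and in the other the disjoint pairs $V_x\in\mathcal{V}_\pi(x)$, $G_x\in\G$ witnessing $x\notin adh_\pi\G$ assemble into a cover of $A$ -- so it fits the paper seamlessly. The two small points you flag are handled correctly: $\mathcal{B}$ is a filter base precisely because of the contradiction hypothesis, and $X\setminus C\in\mathcal{B}$ since $X\in\F$; the only use of pretopologicality is, as you say, the equivalence $x\in adh_\pi\G \iff \mathcal{V}_\pi(x)\#\G$.
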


The notion of covers has been studied before (see, for example, \cite{Dolecki 2008}) and it is well known that this definition of a pretopological cover is a specific case of the more general notion for convergence spaces.

A familiar example of a pretopology - which is not in general a topology - is given when $X$ is a topological space. In this case, $(X, \theta)$ is a pretopological space and $\mathcal{V}_\xi(x)$ is the filter of closed neighborhoods (in the topological space $X$) at $x$.  The following then characterizes both H-closed spaces and H-sets in the terms of pretopologies.

\begin{theorem} Let $X$ be a Hausdorff topological space and $A \subseteq X$.  
\begin{enumerate}
\item $X$ is H-closed if and only if $(X, \theta_X)$ is a compact pretopological space.
\item $A$ is an H-set in $X$ if and only if $A$ is a compact subset of $(X, \theta_X)$.
\end{enumerate}
\end{theorem}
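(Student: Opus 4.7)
The plan is to reduce both statements to Velicko's filter characterizations in Theorem 2.4 by identifying the pretopological adherence operator of $(X,\theta_X)$ with Velicko's $\theta$-adherence operator $adh_\theta$ on filters.

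First, I would unpack the vicinity filter of $\theta_X$. Since $\F \to_{\theta_X} x$ iff $cl_X U \in \F$ for every open neighborhood $U$ of $x$, the intersection defining $\mathcal{V}_{\theta_X}(x)$ is precisely the filter generated by $\{cl_X U : U \in \tau(X), x \in U\}$, and this filter itself $\theta_X$-converges to $x$, so $(X,\theta_X)$ is indeed a pretopology (as already noted in the text). Next, I would invoke the characterization from Proposition 3.5 that $x \in adh_{\theta_X} \F$ iff $\mathcal{V}_{\theta_X}(x) \# \F$. Unwinding this, $x \in adh_{\theta_X}\F$ iff $cl_X U \cap F \neq \varnothing$ for every open $U \ni x$ and every $F \in \F$, which is precisely the condition that $x \in cl_\theta F$ for every $F \in \F$. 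Thus $adh_{\theta_X}\F = \bigcap_{F \in \F} cl_\theta F$, agreeing with Velicko's $\theta$-adherence.

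With this identification, part (1) is immediate: $(X,\theta_X)$ is a compact pretopology iff $adh_{\theta_X}\F \neq \varnothing$ for every filter $\F$ on $X$, which by the identification above is exactly the filter characterization of H-closedness in Theorem 2.4(1).

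For part (2), I would recall from Section 3 that $A$ is a compact subset of the convergence space $(X,\theta_X)$ iff whenever $\G$ is a filter on $X$ meeting $A$, one has $adh_{\theta_X}\G \cap A \neq \varnothing$. Substituting the identification from the first paragraph, this reads: every filter on $X$ meeting $A$ satisfies $adh_\theta \G \cap A \neq \varnothing$, which is precisely the condition in Theorem 2.4(2) for $A$ to be an H-set in $X$. No real obstacle arises; the only point requiring care is the bookkeeping that the convergence-theoretic subspace definition of compactness at $A$ (via filters on $X$ meeting $A$) matches the filter-meeting formulation of H-sets, and this is guaranteed by the remark following Definition 3.3.
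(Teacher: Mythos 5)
Your proposal is correct and follows the same route as the paper, which simply declares the result immediate from Theorem 2.4 and the convergence-theoretic definition of compactness; you have merely supplied the (worthwhile) detail that the pretopological adherence $adh_{\theta_X}\F$ coincides with Veli\v{c}ko's $adh_\theta\F=\bigcap_{F\in\F}cl_\theta F$ via the vicinity filters of closed neighborhoods. The only blemishes are trivial citation-number mismatches (the adherence/vicinity characterization is in Proposition 3.3, not 3.5), which do not affect the argument.
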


\begin{proof}
This follow immediately from theorem 2.4 and definition 3.1.
\end{proof}

Just as immediate, but perhaps more interesting, is the case of H-sets in Urysohn spaces.  Recall that a topological space $X$ is Urysohn if distinct points have disjoint closed neighborhoods. The following theorem is due to Vermeer \cite{Vermeer}.

\begin{theorem} Let $X$ be H-closed and Urysohn and $A \subseteq X$.  Then, $A$ is an H-set if and only if $k_X^{\leftarrow}[A]$ is a compact subset of $EX$.\end{theorem}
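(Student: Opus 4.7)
The plan is to invoke Theorem 3.10 so that the claim reduces to showing $A$ is a compact subset of $(X,\theta_X)$ if and only if $k_X^{\la}[A]$ is a compact subset of $EX$. Two structural observations set the scene. First, since $X$ is Urysohn, distinct points have disjoint closed neighborhoods, so by Proposition 3.6 the pretopology $(X,\theta_X)$ is Hausdorff. Second, because $EX$ is Tychonoff, hence regular, closed neighborhoods form a base at each point of $EX$; thus ordinary convergence on $EX$ coincides with $\theta_{EX}$-convergence, and $\theta$-continuity of $k_X$ is exactly continuity of $k_X\colon EX\to(X,\theta_X)$ as a map of convergence spaces.

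For the forward direction, suppose $A$ is an H-set. Since $X$ is H-closed, $EX$ is compact (a standard fact from absolute theory, see \cite{Porter Woods}). Let $\U$ be an ultrafilter on $EX$ with $k_X^{\la}[A]\in\U$. Then $k_X(\U)$ is an ultrafilter on $X$ containing $A$, and compactness of $A$ in $(X,\theta_X)$ provides $x\in A$ with $k_X(\U)\to_{\theta_X} x$. Compactness of $EX$ supplies $y\in EX$ with $\U\to y$, and continuity of $k_X\colon EX\to(X,\theta_X)$ then forces $k_X(\U)\to_{\theta_X} k_X(y)$. Hausdorffness of $(X,\theta_X)$ makes $k_X(y)=x$, so $y\in k_X^{\la}(x)\subseteq k_X^{\la}[A]$, giving $adh_{EX}\U \cap k_X^{\la}[A]\neq\varnothing$.

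For the reverse direction, let $\F$ be a filter on $X$ meeting $A$. Surjectivity of $k_X$ makes $\{k_X^{\la}[F]:F\in\F\}$ a filter base on $EX$ whose generated filter $\F^{\ast}$ meets $k_X^{\la}[A]$; compactness of $k_X^{\la}[A]$ then yields $y\in k_X^{\la}[A]$ with $y\in cl_{EX}k_X^{\la}[F]$ for every $F\in\F$. Setting $x=k_X(y)\in A$, let $\G$ be the filter generated by $\F$ together with $\{cl_X U : U\in\tau(X),\ x\in U\}$. The one delicate point is properness of $\G$, and this is exactly what $\theta$-continuity of $k_X$ at $y$ delivers: given open $U\ni x$, choose open $V\ni y$ with $k_X[cl_{EX} V]\subseteq cl_X U$; since $y\in cl_{EX} k_X^{\la}[F]$, the open set $V$ meets $k_X^{\la}[F]$, which translates into $cl_X U\cap F\neq\varnothing$. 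By construction $\G\supseteq\F$ and $\G\to_{\theta_X} x$, placing $x\in adh_{\theta_X}\F\cap A$.

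The main obstacle is coordinating the two pretopological structures $\theta_X$ and $\theta_{EX}$ through the $\theta$-continuous map $k_X$; the Urysohn hypothesis is used precisely to make $(X,\theta_X)$ Hausdorff so that $\theta_X$-limits are unique in the forward direction, and the external fact that $EX$ is compact whenever $X$ is H-closed is what allows the ultrafilter $\U$ to have a convergence point in $EX$ to begin with. Once these pieces are aligned, the argument is essentially bookkeeping against Propositions 3.6--3.9.
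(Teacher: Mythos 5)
The paper does not actually prove this statement: it is quoted as a theorem of Vermeer with a citation to \cite{Vermeer}, so there is no internal proof to compare against. Your argument is correct and self-contained modulo two standard facts about absolutes that you rightly flag as external ($EX$ is compact precisely when $X$ is H-closed, and $EX$ is regular, so ordinary convergence on $EX$ agrees with $\theta_{EX}$-convergence and $\theta$-continuity of $k_X$ becomes continuity into $(X,\theta_X)$). The structure is sound: the reduction via the pretopological characterization of H-sets, the use of the Urysohn hypothesis only to make $(X,\theta_X)$ Hausdorff so that the two limits of $k_X(\U)$ coincide, and the use of H-closedness only to guarantee that $\U$ converges in $EX$. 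The reverse direction correctly uses neither hypothesis --- it is the general fact that the $\theta$-continuous image of a compact set is an H-set --- and your verification that $\mathcal{V}_{\theta_X}(k_X(y))$ meshes with $\F$ via $\theta$-continuity at $y$ is exactly right (the auxiliary filter $\G$ is not really needed once meshing is established, by the proposition that $x\in adh_\pi\F$ iff $\mathcal{V}_\pi(x)\#\F$). Two small elisions worth making explicit: in the forward direction you verify compactness of $k_X^{\la}[A]$ only for ultrafilters containing it, which suffices since any filter meeting $k_X^{\la}[A]$ refines to such an ultrafilter $\U$ and $\lim\U\subseteq adh\U\subseteq adh\G$; and your numbering of the cited results does not match the paper's (the pretopological characterization of H-sets is the theorem immediately preceding this one, not the one following it), so refer to them by content rather than number.
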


In the same paper, Vermeer gives an example of an H-closed non-Urysohn space $X$ which has an H-set which is not the image under $k_X$ of any compact subspace of $EX$.  A more general phrasing of the above theorem of Vermeer is that if $A$ is an H-set in an H-closed Urysohn space, then there exists a compact Hausdorff topological space $K$ and a $\theta$-continuous function $f:K \to X$ such that $f[K] = A$.  Vermeer then asked if this was true for an H-set in any Hausdorff topological space; i.e. if $X$ is a Hausdorff topological space, does there exist a compact, Hausdorff topological space $K$ and a $\theta$-continuous function $f:K \to X$ such that $f[K] = A$.  The answer, it turns out, is no.  This was shown first by Bella and Yaschenko in \cite{Bella Yaschenko}.  Later, in \cite{McNeill 2010}, McNeill showed that it is in addition possible to construct a Urysohn space containing an H-set which is not the $\theta$-continuous image of a compact, Hausdorff topological space.  This makes the following observation interesting.

\begin{theorem}
Let $X$ be a Urysohn topological space.  Then, $A$ is an H-set if and only if $(A, \theta|_A)$ is a compact, Hausdorff pretopological space, where $\theta|_A$ is the subconvergence on $A$ inherited from $(X, \theta)$.  In particular, if $X$ is a Urysohn topological space and $A \subseteq X$ is an H-set, then there exists a compact, Hausdorff pretopological space $(K, \pi)$ and a continuous function $f:(K, \pi) \to (X ,\theta)$ such that $f[K] = A$.
\end{theorem}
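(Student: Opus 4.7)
The plan is to reduce the theorem to Theorem 3.10 and verify the two extra conditions (pretopological and Hausdorff) by unwinding definitions and invoking the Urysohn hypothesis. By Theorem 3.10(2), $A$ is an H-set in $X$ exactly when $A$ is a compact subset of $(X,\theta_X)$, and by the construction of the subconvergence given just before Section~3.1 this is equivalent to $(A,\theta|_A)$ being a compact convergence space. So all that remains is to show that, for any $A\subseteq X$, $\theta|_A$ is automatically a pretopology, and that the Urysohn property of $X$ forces $(A,\theta|_A)$ to be Hausdorff.

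First I would verify that the restriction of any pretopology to a subset is again a pretopology, applied to $(X,\theta)$. For $x\in A$ and a filter $\F$ on $A$ with canonical extension $\hat{\F}$ to $X$, the pretopological characterization of convergence gives that $x\in\lim_{\theta}\hat{\F}$ iff $\V_\theta(x)\subseteq\hat{\F}$, i.e.\ iff $V\cap A\in\F$ for every $V\in\V_\theta(x)$. Since $\V_\theta(x)$ is the filter of closed neighborhoods of $x$ in the topological space $X$, this shows
\[
\V_{\theta|_A}(x)\;=\;\{\,V\cap A : V\in\V_\theta(x)\,\},
\]
the trace of the closed-neighborhood filter on $A$, and this trace filter obviously $\theta|_A$-converges to $x$. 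Hence $\theta|_A$ is a pretopology.

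Next, for Hausdorffness I would use Proposition 3.6. Given distinct $x,y\in A$, the Urysohn property of $X$ supplies disjoint closed neighborhoods $C_x,C_y$ of $x,y$ in $X$; then $C_x\cap A$ and $C_y\cap A$ are disjoint members of $\V_{\theta|_A}(x)$ and $\V_{\theta|_A}(y)$ respectively, so Proposition 3.6 delivers that $(A,\theta|_A)$ is Hausdorff. Combining the three observations yields the biconditional. For the ``in particular'' clause, set $(K,\pi)=(A,\theta|_A)$ and let $f\colon K\to X$ be the inclusion map; since $\theta|_A$ was defined as the initial (coarsest) convergence making the inclusion continuous into $(X,\theta)$, $f$ is a continuous function of convergence spaces with $f[K]=A$.

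The argument is largely definition-chasing, so the main care point is the second step: one must resist reading ``subconvergence'' as ``subspace topology'' and instead derive $\V_{\theta|_A}(x)$ directly from the pretopological characterization of convergence in $(X,\theta)$. Once that identification is correctly made, the pretopological and Hausdorff conditions drop out immediately and the compactness half is just Theorem 3.10(2).
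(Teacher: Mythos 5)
Your argument is correct and is exactly the route the paper intends: the paper states this theorem without an explicit proof, presenting it as an immediate observation from Theorem 3.10, and your write-up supplies precisely the missing details (identifying $\V_{\theta|_A}(x)$ with the trace on $A$ of the closed-neighborhood filter, invoking the Urysohn hypothesis together with Proposition 3.6 for Hausdorffness, and taking the inclusion map for the ``in particular'' clause). Nothing to correct.
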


The question remains - if $X$ is a Hausdorff topological space and $A$ is an H-set in $X$, is there a compact, Hausdorff pretopological space $(K, \pi)$ and a continuous function $f:(K, \pi) \to (X, \theta)$ such that $f[K] = A$?  More broadly, is there a pretopological version of the absolute?  

\subsection{Perfect Maps}

Much of the following can be seen as generalizing the results of \cite{Dickman Porter 1975} to pretopological spaces. Throughout, let $(X, \pi)$ and $(Y, \tau)$ be pretopological spaces. The results below will be used in the construction of the $\theta$-quotient convergence in section 4.

\begin{definition} A function $f:(X, \pi) \to (Y, \tau)$ is {\it perfect} if whenever $\F \to_\tau y$, we have that $f^\leftarrow(\F)$ is compact at $f^\leftarrow(y)$.\end{definition}

In the case of topological spaces, this definition was shown by Whyburn \cite{Whyburn} to be equivalent to the usual definition a perfect function for topological spaces; that is, a function which is closed and has compact fibers.  

\begin{proposition} A function $f:(X, \pi) \to (Y, \tau)$ is perfect if and only if $f(adh_\pi \F) \supseteq adh_\tau f(\F)$ for each filter $\F$ on $X$.\end{proposition}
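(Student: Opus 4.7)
The plan is to prove both implications by translating between grisms on $X$ and on $Y$ via a small mesh-duality lemma: for a function $f:X\to Y$, a filter $\F$ on $X$ and a filter $\G$ on $Y$,
\[
\F \# f^{\leftarrow}(\G) \ \Longleftrightarrow\ f(\F)\#\G.
\]
Both directions follow by unwinding the definitions: if $x\in F\cap f^\leftarrow(G)$ then $f(x)\in f[F]\cap G$, and conversely if $y\in f[F]\cap G$ then any preimage of $y$ in $F$ lies in $F\cap f^\leftarrow(G)$. I would state this as a preliminary observation (or cite it in passing) and then use it for both directions of the proposition.

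For the forward implication, assume $f$ is perfect and fix a filter $\F$ on $X$. To show $adh_\tau f(\F)\subseteq f[adh_\pi \F]$, take $y\in adh_\tau f(\F)$. By definition of adherence there is a filter $\G$ on $Y$ with $\G\#f(\F)$ and $\G\to_\tau y$. Perfectness gives that $f^\leftarrow(\G)$ is compact at $f^\leftarrow(y)$. Now the mesh-duality lemma converts $\G\#f(\F)$ into $\F\#f^\leftarrow(\G)$, and compactness of $f^\leftarrow(\G)$ at $f^\leftarrow(y)$ yields $adh_\pi \F\cap f^\leftarrow(y)\neq\varnothing$. Any $x$ in this intersection witnesses $y=f(x)\in f[adh_\pi\F]$.

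For the converse, assume the adherence inclusion holds for every filter. To prove perfectness, fix $\G\to_\tau y$ and a filter $\mathcal{H}$ on $X$ with $\mathcal{H}\#f^\leftarrow(\G)$; we must produce a point of $adh_\pi \mathcal{H}\cap f^\leftarrow(y)$. The duality lemma gives $f(\mathcal{H})\#\G$, so since $\G\to_\tau y$ we have $y\in adh_\tau f(\mathcal{H})$. The standing hypothesis then places $y$ in $f[adh_\pi\mathcal{H}]$, so some $x\in adh_\pi\mathcal{H}$ with $f(x)=y$ exists, which is exactly a point of $adh_\pi\mathcal{H}\cap f^\leftarrow(y)$, as required.

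There is really no deep obstacle here: the whole proposition is a repackaging of the adherence-inclusion characterization of compact-at-a-set in the pretopological setting (Proposition~3.9 style), combined with the $\#$-duality between direct and inverse images of filters. The only minor subtlety is a bookkeeping one, namely that $f^\leftarrow(\G)$ is a genuine filter exactly when every $G\in\G$ meets $f[X]$; in the forward direction this is automatic because $\G\#f(\F)$ forces $G\cap f[X]\neq\varnothing$, and in the reverse direction it is baked into the hypothesis that $\mathcal{H}\#f^\leftarrow(\G)$ is assumed meaningful, so nothing extra needs to be said.
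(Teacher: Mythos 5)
Your proof is correct and follows essentially the same route as the paper's: both directions hinge on the mesh duality $\F \# f^{\leftarrow}(\G) \iff f(\F)\#\G$ together with the definitions of adherence and of compactness at a set, and your converse direction is word-for-word the paper's argument. The only difference is in the forward direction: the paper argues by contradiction using the specific filter $\mathcal{V}_\tau(y)$ (invoking the hypothesis that $\tau$ is a pretopology to get $\mathcal{V}_\tau(y)\to_\tau y$), whereas you argue directly from the witnessing filter $\G\#f(\F)$ with $\G\to_\tau y$ supplied by the definition of adherence --- a marginally cleaner step that, as a bonus, does not use the pretopological assumption at all in that direction.
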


\begin{proof} 
Suppose that $f$ is perfect.  Let $\F$ be a filter on $X$ and let $y \in adh_\tau f(\F)$.  By way of contradiction, suppose that $f^\leftarrow(y) \cap adh_\pi \F = \varnothing$.  Since $\tau$ is a pretopology, $\mathcal{V}_\tau(y) \to_\tau y$ and since $f$ is perfect, it follows that $f^\leftarrow(\mathcal{V}_\tau(y))$ is compact at $f^\leftarrow(y)$.  Since $y \in adh_\tau f(\F)$, $\mathcal{V}_\tau(y) \# f(\F)$.  It follows that $f^\leftarrow(\mathcal{V}_\tau(y)) \# \F$.  Thus, it must be that $adh_\pi \F \cap f^\leftarrow(y) \not= \varnothing$, a contradiction.  Hence, $y \in f(adh_\pi \F)$. 

Conversely, suppose $\F$ is a filter on $Y$ and $\F \to_\tau y$. Let $\G$ be a filter on $X$ such that $\G \# f^\leftarrow(\F)$.  Then $f(\G) \# \F$.  Since $\F \to_\tau y$, it follows that $y \in adh_\tau f(\G) \subseteq f[adh_\pi \G]$.  So, we can find $x \in adh_\pi \G$ such that $f(x) = y$.  In other words, $adh_\pi \G \cap f^\leftarrow(y) \not= \varnothing$, and $f^\leftarrow(\F)$ is compact at $f^\leftarrow(y)$.
\end{proof}

To get a similar characterization to that of perfect functions between topological spaces for perfect functions between pretopological spaces we need the concept of {\it cover-compact} sets, a strengthening of compact sets. This characterization can be found in \cite{Dolecki 2002}, but we feel it is worthwhile to lay out the details in this less technical setting.

\begin{definition} 
Let $(X, \pi)$ be a pretopological space and $A \subseteq X$.  Then $A$ is {\it cover-compact} whenever $\mathcal{C}$ is a cover of $A$, there exist $C_1, ..., C_n \in \mathcal{C}$ such that $A \subseteq inh_\pi \left(\bigcup_{i=1}^n C_i\right)$.
\end{definition}

\begin{proposition} 
Let $(X, \pi)$ be a pretopological space and $A \subseteq X$.  The following are equivalent,
\begin{enumerate}
\item $A$ for any filter $\F$ on $X$, $adh_\pi \F \cap A = \varnothing$ implies that there exists some $F \in \F$ such that $adh_\pi F \cap A = \varnothing$,
\item $A$ is cover-compact,
\item $adh_\pi \F \cap A = \varnothing$ implies there exists $V \subseteq X$ and $F \in \F$ such that $A \subseteq inh_\pi V$ and $V \cap F = \varnothing$ for and filter $\F$ on $X$.
\end{enumerate}
\end{proposition}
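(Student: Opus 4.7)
The plan is to establish the equivalences cyclically as $(3) \Rightarrow (1) \Rightarrow (2) \Rightarrow (3)$, leaning throughout on two facts from Proposition 3.4: that $inh_\pi V = X \setminus adh_\pi(X \setminus V)$ (so $V \in \mathcal{V}_\pi(x)$ exactly when $x \in inh_\pi V$), and that $x \in adh_\pi \F$ exactly when $\mathcal{V}_\pi(x) \# \F$.

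The two easy directions are $(3) \Rightarrow (1)$ and $(2) \Rightarrow (3)$. For the first, given $\F$ with $adh_\pi \F \cap A = \varnothing$, apply (3) to obtain $V$ and $F \in \F$ with $A \subseteq inh_\pi V$ and $V \cap F = \varnothing$; then for any $x \in A$, $V$ is a vicinity at $x$ disjoint from $F$, so $x \notin adh_\pi F$, whence $adh_\pi F \cap A = \varnothing$. For the second, given $\F$ with $adh_\pi \F \cap A = \varnothing$, each $x \in A$ admits $U_x \in \mathcal{V}_\pi(x)$ and $F_x \in \F$ with $U_x \cap F_x = \varnothing$; the family $\{U_x : x \in A\}$ covers $A$, and cover-compactness supplies $x_1, \ldots, x_n$ with $A \subseteq inh_\pi\bigl(\bigcup_i U_{x_i}\bigr)$, so $V := \bigcup_i U_{x_i}$ and $F := \bigcap_i F_{x_i} \in \F$ witness (3).

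The main obstacle is $(1) \Rightarrow (2)$, which calls for producing a filter out of a putatively bad cover. Assume $A$ satisfies (1), and suppose for contradiction that $\mathcal{C}$ is a cover of $A$ such that no finite subfamily $C_1, \ldots, C_n$ satisfies $A \subseteq inh_\pi\bigl(\bigcup_i C_i\bigr)$. Form the family $\mathcal{B} = \bigl\{X \setminus \bigcup_{i=1}^n C_i : n \in \naturals,\, C_1, \ldots, C_n \in \mathcal{C}\bigr\}$, which is closed under finite intersections and whose members are all nonempty (if some $\bigcup_i C_i = X$ then trivially $A \subseteq inh_\pi X$, contradicting the hypothesis). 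Let $\F$ be the filter generated by $\mathcal{B}$. Because $\mathcal{C}$ covers $A$, each $x \in A$ has some $C \in \mathcal{C} \cap \mathcal{V}_\pi(x)$, and then the disjoint pair $C$ and $X \setminus C \in \F$ shows that $\mathcal{V}_\pi(x)$ does not meet $\F$; therefore $adh_\pi \F \cap A = \varnothing$. Applying (1) produces $F \in \F$ with $adh_\pi F \cap A = \varnothing$; such $F$ contains some $X \setminus \bigcup_{i=1}^n C_i \in \mathcal{B}$, and monotonicity of adherence together with the identity $inh_\pi\bigl(\bigcup_i C_i\bigr) = X \setminus adh_\pi\bigl(X \setminus \bigcup_i C_i\bigr)$ forces $A \subseteq inh_\pi\bigl(\bigcup_i C_i\bigr)$, contradicting the choice of $\mathcal{C}$.
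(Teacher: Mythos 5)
Your proof is correct and follows essentially the same route as the paper: the same cyclic chain of implications (merely listed starting from a different vertex), the same filterbase of complements of finite unions for the cover direction, and the same choice of disjoint vicinities and filter elements for the remaining two steps. The only difference is cosmetic — you are slightly more careful than the paper in checking that the filterbase is proper (no finite union of cover elements can be all of $X$), which is a welcome touch but not a change of method.
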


\begin{proof}
Suppose that $A$ is cover-compact and let $\mathcal{C}$ be a cover of $A$. Suppose that no finite subcollection exists as needed.  Then $\F = \{X \setminus (C_1 \cup ... \cup C_n) : C_i \in \mathcal{C}, i \in \naturals\}$ is a filterbase on $X$.  Note that $adh_\pi \F \subseteq X \setminus \bigcup_{C \in \mathcal{C}} inh_\pi C$ and as such $adh_\pi \F \cap A = \varnothing$.  Since $A$ is cover-compact, we can find $F \in \F$ such that $adh_\pi F \cap A = \varnothing$.  However, $F = X \setminus (C_1 \cup ... \cup C_n)$ for some $C_1,...,C_n \in \mathcal{C}$, so we have that $A \subseteq inh_\pi (C_1 \cup ... \cup C_n)$, a contradiction.

Suppose that $\mathcal{F}$ is a filter on $X$ and $adh_\pi \F \cap A = \varnothing$.  Then, for each $x \in A$, fix $V_x \in \mathcal{V}_\pi(x)$ and $F_x \in \mathcal{F}$ such that $V_x \cap F_x = \varnothing$.  Then $\{V_x : x \in A\}$ is a cover of $A$.  By assumption, we can choose $x_1,...,x_n \in A$ such that $A \subseteq inh_\pi \left(\bigcup_{i=1}^n V_{x_i} \right)$.  Therefore, $V = \bigcup_{i=1}^n V_{x_i} \in \mathcal{V}_\pi(A)$ and $V \cap (F_{x_1} \cap ... \cap F_{x_n}) = \varnothing$.  Since $F_{x_1} \cap ... \cap  F_{x_n} \in \mathcal{F}$, we have shown that (c) holds.

Lastly, let $\mathcal{F}$ be a filter on $X$ such that $adh_\pi \F \cap A = \varnothing$.  By assumption, we can find $V \in \mathcal{V}_\pi(A)$ and $F \in \F$ such that $V \cap F = \varnothing$.  For each $x \in A$, $V \in \mathcal{V}_\pi(x)$, so $x \notin adh_\pi F$.  It follows immediately that $A \cap adh_\pi F \not= \varnothing$.
\end{proof}

It is useful to note that if $A \subseteq X$ is cover-compact, then $adh_\pi A = A$.

\begin{theorem} Let $f:(X, \pi) \to (Y, \tau)$ be a map between pretopological spaces satisfying (a) $f[adh_\pi A] \supseteq adh_\tau f[A]$ for any $A \subseteq X$ and (b) $f^\leftarrow(y)$ is cover-compact for each $y \in Y$.  Then, $f$ is perfect. 
\end{theorem}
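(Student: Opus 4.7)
The plan is to unpack the definition of perfect (Definition 3.11) and derive a contradiction using Proposition 3.14 together with hypothesis (a). So let $\F$ be a filter on $Y$ with $\F \to_\tau y$, and let $\G$ be a filter on $X$ with $\G \,\#\, f^\leftarrow(\F)$. I must show $\mathrm{adh}_\pi\,\G \cap f^\leftarrow(y) \neq \varnothing$.

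First, I would observe that $\G \,\#\, f^\leftarrow(\F)$ forces $f(\G)\,\#\,\F$: for $G \in \G$ and $F \in \F$ a point in $G \cap f^\leftarrow(F)$ maps into $f[G] \cap F$. Combined with $\F \to_\tau y$, this gives $y \in \mathrm{adh}_\tau\,f(\G)$, and in fact $y \in \mathrm{adh}_\tau\,f[G]$ for \emph{every} $G \in \G$ (since the principal filter $\langle f[G]\rangle$ still meets $\F$).

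Now suppose for contradiction that $\mathrm{adh}_\pi\,\G \cap f^\leftarrow(y) = \varnothing$. The fiber $f^\leftarrow(y)$ is cover-compact, so I can invoke Proposition 3.14 (the equivalence (a) $\Leftrightarrow$ (c)) to produce $V \subseteq X$ and $G \in \G$ with $f^\leftarrow(y) \subseteq \mathrm{inh}_\pi\, V$ and $V \cap G = \varnothing$. By the previous paragraph, $y \in \mathrm{adh}_\tau\,f[G]$. Hypothesis (a), applied to the set $G$, gives $\mathrm{adh}_\tau\,f[G] \subseteq f[\mathrm{adh}_\pi\,G]$, so there is some $x \in \mathrm{adh}_\pi\,G$ with $f(x) = y$.

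The final step is the clash: since $x \in f^\leftarrow(y) \subseteq \mathrm{inh}_\pi\,V$, Proposition 3.5 yields $V \in \mathcal{V}_\pi(x)$; but $x \in \mathrm{adh}_\pi\,G$ means (again by Proposition 3.5) that $\mathcal{V}_\pi(x) \,\#\, \langle G\rangle$, so $V \cap G \neq \varnothing$, contradicting the choice of $V$ and $G$. Hence $\mathrm{adh}_\pi\,\G \cap f^\leftarrow(y) \neq \varnothing$, proving $f$ is perfect.

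The only mildly delicate point is the bookkeeping around which filters meet which: specifically, passing from $\G \,\#\, f^\leftarrow(\F)$ to $f(\G)\,\#\,\F$, and from there to the conclusion that $y \in \mathrm{adh}_\tau f[G]$ for each individual $G \in \G$ (rather than merely $y \in \mathrm{adh}_\tau f(\G)$). Once that is in place, Proposition 3.14(c) is tailor-made to produce the neighborhood $V$ of the fiber that supplies the contradiction, so no further technical obstacle is anticipated.
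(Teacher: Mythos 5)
Your proof is correct and follows essentially the same route as the paper's: establish $y \in \mathrm{adh}_\tau f[G]$ for each $G \in \mathcal{G}$, use hypothesis (a) to get $\mathrm{adh}_\pi G \cap f^{\leftarrow}(y) \neq \varnothing$ for each $G$, and then use cover-compactness of the fiber to pass from individual filter elements to the whole filter. The only cosmetic difference is that the paper applies clause (a) of the cover-compactness proposition directly (in contrapositive form), whereas you run a contradiction through clause (c) and unwind the vicinity/adherence duality by hand; the two are interchangeable.
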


\begin{proof}
Let $\mathcal{F}$ be a filter on $Y$ which $\tau$-converges to some $y \in Y$.  Let $\mathcal{G}$ be a filter on $X$ which meets $f^\la(\F)$.  Then, $f(\G)$ meets $\F$.  Since $\F \to_\tau y$, $\F$ is compact at $y$.  Therefore, $y \in adh_\tau f(\G) = \bigcap_{G \in \G} adh_\tau f[G]$.  By assumption (a), for each $G \in  \G$, $f[adh_\pi G] \supseteq adh_\tau f[G]$.  Therefore, $adh_\pi G \cap f^\la(y) \not= \varnothing$ for each $G \in \G$.  By assumption (b), $f^\la(y)$ is cover-compact, so $adh_\pi \G \cap f^\la(y) \not= \varnothing$.  In other words, $f^\la(F)$ is compact at $f^\la(y)$ and $f$ is perfect.
\end{proof}

\begin{theorem} 
Let $f:(X,\pi) \to (Y, \tau)$ be perfect and continuous.  Then, $f$ satisfies (a) and (b) of 4.13.
\end{theorem}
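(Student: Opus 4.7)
My plan is to derive (a) directly from the characterization of perfect maps given in the preceding proposition, and to derive (b) by combining perfectness with continuity and the cover-compactness characterization.

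For (a): the perfect-map characterization states that $f[adh_\pi \F] \supseteq adh_\tau f(\F)$ for every filter $\F$ on $X$. Applying this to the principal filter $\langle A \rangle$, and noting that $adh_\pi \langle A \rangle = adh_\pi A$ while $f(\langle A \rangle) = \langle f[A] \rangle$, condition (a) follows immediately.

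For (b): fix $y \in Y$. Using the first equivalence in the cover-compactness proposition, it suffices to show that whenever $\F$ is a filter on $X$ with $adh_\pi \F \cap f^\la(y) = \varnothing$, there exists some $F \in \F$ with $adh_\pi F \cap f^\la(y) = \varnothing$. Given such an $\F$, perfectness yields $adh_\tau f(\F) \subseteq f[adh_\pi \F]$, so $y \notin adh_\tau f(\F)$. In the pretopological setting, the adherence of a filter coincides with the intersection of the adherences of its members, a consequence of the earlier identity $x \in adh_\pi \F \iff \mathcal{V}_\pi(x) \# \F$. Hence $y \notin \bigcap_{F \in \F} adh_\tau f[F]$, so we may pick $F \in \F$ with $y \notin adh_\tau f[F]$. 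Continuity of $f$ then gives $f[adh_\pi F] \subseteq adh_\tau f[F]$, whence $adh_\pi F \cap f^\la(y) = \varnothing$, as desired.

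The argument is largely routine; the one point worth flagging is that perfectness alone is insufficient. Perfectness only provides a ``$\supseteq$'' inclusion on filter adherences and so can only remove $y$ from a filter-level adherence on the $Y$ side, while cover-compactness demands a set-level statement about an individual $F \in \F$ on the $X$ side. Bridging this gap requires both the pretopological identity $adh_\tau f(\F) = \bigcap_{F \in \F} adh_\tau f[F]$, which isolates a single member $F$, and the continuity inclusion $f[adh_\pi F] \subseteq adh_\tau f[F]$, which transports the conclusion back to $X$. This interplay---perfectness for one inclusion, continuity for the other---is the only genuine content of the proof.
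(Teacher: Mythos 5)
Your proof is correct and follows essentially the same route as the paper's: perfectness (via the characterization $f[adh_\pi \F] \supseteq adh_\tau f(\F)$) gives (a) and gives $y \notin adh_\tau f(\F)$ in the proof of (b), after which a single $F \in \F$ is isolated and continuity transports the conclusion back to $X$, verifying condition (1) of the cover-compactness proposition. The only differences are cosmetic: the paper extracts $F$ by unwinding $\mathcal{V}_\tau(y) \# f(\F)$ and uses the vicinity characterization of continuity, whereas you use the identity $adh_\tau f(\F) = \bigcap_{F \in \F} adh_\tau f[F]$ together with the set-level characterization $f[adh_\pi F] \subseteq adh_\tau f[F]$, and for (a) you note (rightly) that perfectness alone already suffices, where the paper derives the stronger equality by also invoking continuity.
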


\begin{proof}
By proposition 4.8(c) and proposition 4.10, $f[adh_\pi A] = adh_\tau f[A]$ for each $A \subseteq X$.  Thus, a property stronger than (a) holds. To see that (b) holds, fix $y \in Y$ and let $\F$ be a filter on $X$ such that $adh_\pi \F \cap f^\leftarrow(y) = \varnothing$. By proposition 4.10, $y \notin f[adh_\pi \F] \supseteq adh_\tau f(\F)$.  Thus, we can find $V \in \mathcal{V}_\tau(y)$ and  $F \in \F$ such that $V \cap f(F) = \varnothing$. It follows that $f^\la[V] \cap F = \varnothing$. Since $f$ is a continuous function, for each $x \in f^\leftarrow(y)$, fix $U_x \in \mathcal{V}_\pi(x)$ such that $f[U_x] \subseteq V$.  Then, $\bigcup_{x \in f^\la(y)} U_x \subseteq f^\la[V]$ and thus $\bigcup_{x \in f^\la(y)} U_x \cap F = \varnothing$.  So, $adh_\pi F \cap f^\la(y) = \varnothing$, as needed.
\end{proof}

\begin{corollary} 
A continuous function $f:(X, \pi) \to (Y, \tau)$ is perfect if and only if it satisfies (a) and (b) of 4.13.
\end{corollary}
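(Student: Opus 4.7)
The corollary is essentially a restatement combining the two preceding theorems, so the plan is short. My proposal is to observe that the forward direction (perfect and continuous $\Rightarrow$ (a) and (b)) is precisely the content of Theorem 4.14, while the reverse direction ((a) and (b) $\Rightarrow$ perfect) is precisely the content of Theorem 4.13. So the entire argument reduces to citing these two results and noting that continuity is actually only required for the forward implication.

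Concretely, I would write: assume $f$ is perfect and continuous; then Theorem 4.14 gives us both (a) $f[adh_\pi A] \supseteq adh_\tau f[A]$ for every $A \subseteq X$ and (b) cover-compactness of each fiber $f^\leftarrow(y)$. Conversely, assume $f$ is continuous and satisfies (a) and (b); then Theorem 4.13 directly yields that $f$ is perfect.

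There is no real obstacle here, and nothing new to prove. The only minor thing to flag, for the reader's benefit, is the asymmetry already visible in the statements: Theorem 4.13 does not require continuity of $f$ as a hypothesis, while the converse direction (Theorem 4.14) does use continuity, both to strengthen (a) to equality via Proposition 4.8(c) and Proposition 4.10, and to produce vicinities $U_x \in \mathcal{V}_\pi(x)$ with $f[U_x] \subseteq V$ when verifying (b). So in the corollary statement it is the continuity assumption that justifies invoking Theorem 4.14 in the forward direction, whereas the reverse direction could in principle be stated without continuity. I would therefore keep the proof to one or two sentences, simply pointing to Theorems 4.13 and 4.14.
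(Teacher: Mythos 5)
Your proposal is correct and matches the paper's intent exactly: the corollary is stated without proof precisely because it is the conjunction of Theorem 4.13 (for the direction from (a) and (b) to perfectness, which needs no continuity) and Theorem 4.14 (for the direction from perfect and continuous to (a) and (b)). Your remark about the asymmetry in where continuity is actually used is accurate and consistent with the paper's proofs.
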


\section{\bf PHC Spaces}

In this section we will define a generalization of H-closed spaces to pretopological spaces.  After establishing some basic facts about the so-called {\it PHC spaces}, we will describe a method for constructing PHC pretopologies and PHC extensions.

The following definition appears in \cite{Dolecki Gauld}.

\begin{definition} Let $(X, \pi)$ be a pretopological space.  The {\it partial regularization} $r \pi$ of $\pi$ is the pretopology determined by the vicinity filters $\mathcal{V}_{r\pi}(x) = \{adh_\pi U : U \in \mathcal{V}_\pi(x)\}$.  \end{definition}

Notice that if $(X, \tau)$ is a topological space, then $r\tau$ is the usual $\theta$-convergence on $X$.  Thus, a Hausdorff topological space $(X, \tau)$ is H-closed if and only if $(X, r\tau)$ is compact.  This inspires the following definition, aiming to generalize the notion of H-closed spaces to pretopological spaces.

\begin{definition} A Hausdorff pretopological space $(X, \pi)$ is {\it PHC (pretopologically H-closed)} if $(X, r\pi)$ is compact.  Without the assumption of Hausdorff, we will use the term {\it quasi PHC} \end{definition}

Given a filter $\F$ on a pretopological space $(X, \pi)$ let $\F^1 = \{F \in \F : inh_\pi F \in \F\}$.  Inductively, define $\F^n = \{F \in \F^{n-1} : inh_\pi F \in \F^{n-1}\}$.  Define $\F^\circ  = \bigcap_{n \in \naturals} \F^n$.  If we define a filter $\F$ to be {\it pretopologically open} if $F \in \F$ implies $inh_\pi F \in \F$, then $\F^\circ$ is the largest pretopologically open filter contained in $\F$.

\begin{lemma} Let $(X, \pi)$ be a pretopological space and let $\F$ be a pretopologically open filter on $X$.  Then, $adh_\pi \F = adh_{r\pi} \F$. \end{lemma}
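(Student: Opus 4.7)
The plan is to prove the two inclusions separately, noting that one direction holds with no hypothesis on $\F$ at all, while the other is where pretopological openness enters.

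First I would establish the ``easy'' direction $adh_\pi \F \subseteq adh_{r\pi} \F$, which is really a statement that $r\pi$ is coarser than $\pi$ and so has more adherent points. Using Proposition 3.5, if $x \in adh_\pi \F$ then $\mathcal{V}_\pi(x) \# \F$. Every $V \in \mathcal{V}_{r\pi}(x)$ has the form $V = adh_\pi U$ for some $U \in \mathcal{V}_\pi(x)$, and since $U \cap F \neq \varnothing$ for every $F \in \F$ and $U \subseteq adh_\pi U$, we get $V \cap F \neq \varnothing$. Thus $\mathcal{V}_{r\pi}(x) \# \F$, so $x \in adh_{r\pi} \F$ by Proposition 3.5 again. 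This inclusion uses nothing about $\F$.

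For the reverse inclusion, which uses the hypothesis, I would reason as follows. Suppose $x \in adh_{r\pi} \F$; I need $\mathcal{V}_\pi(x) \# \F$. Fix arbitrary $U \in \mathcal{V}_\pi(x)$ and $F \in \F$. Because $\F$ is pretopologically open, $inh_\pi F \in \F$. Since $adh_\pi U \in \mathcal{V}_{r\pi}(x)$ and $\mathcal{V}_{r\pi}(x) \# \F$, we can choose a witness
\[ y \in adh_\pi U \cap inh_\pi F. \]
From $y \in inh_\pi F$ and Proposition 3.5 we have $F \in \mathcal{V}_\pi(y)$; from $y \in adh_\pi U$ we have $\mathcal{V}_\pi(y) \# \langle U \rangle$. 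Applying the latter to the set $F$ itself gives $U \cap F \neq \varnothing$. Since $U$ and $F$ were arbitrary, $\mathcal{V}_\pi(x) \# \F$, hence $x \in adh_\pi \F$.

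I do not expect any serious obstacle: the whole proof is the dictionary-translation between vicinities, inherence, and adherence recorded in Proposition 3.5. The only substantive move is the swap $F \rightsquigarrow inh_\pi F$ inside $\F$, which is exactly the content of pretopological openness, and this is precisely what lets the $r\pi$-vicinity $adh_\pi U$ ``collapse back'' to the $\pi$-vicinity $U$ via the intermediate point $y$.
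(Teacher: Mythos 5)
Your proof is correct and follows essentially the same route as the paper: the easy inclusion from $r\pi$ being coarser than $\pi$, and for the reverse inclusion the same key step of replacing $F$ by $inh_\pi F$ via openness and observing that $adh_\pi U$ meets $inh_\pi F$ exactly when $U$ meets $F$ (the paper phrases this contrapositively, starting from $x \notin adh_\pi \F$, but the content is identical). The only nitpick is that the vicinity/inherence/adherence dictionary you invoke is Proposition 3.3 in the paper, not 3.5.
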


\begin{proof} To begin, since $r\pi < \pi$, $adh_\pi \F \subseteq adh_{r\pi} \F$.  Now, $x \notin adh_\pi \F$ if and only if we can find $F \in \F$ and $U \in \mathcal{V}_\pi(x)$ such that $U \cap F = \varnothing$.  Since $U \cap F = \varnothing$, if $y \in inh_\pi F$, then $y \notin adh_\pi U$.   In other words, $adh_\pi U \cap inh_\pi F = \varnothing$.  Since $\F$ is open, $inh_\pi F \in \F$ and by definition $x \notin adh_{r\pi} \F$, as needed. \end{proof}

\begin{lemma} Let $(X, \pi)$ be a pretopological space and let $\F$ be a filter on $X$.  Then, $adh_{r\pi} \F = adh_\pi \F^1$.  More, for each $n \in \naturals$, $adh_{r\pi} \F^n = adh_\pi \F^{n+1}$.\end{lemma}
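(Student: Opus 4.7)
The plan is to translate both sides into concrete set-theoretic meeting conditions via the pretopological identity $x \in adh_\xi \F \iff \V_\xi(x) \# \F$, valid in any pretopology. Applied to $r\pi$, this rewrites $x \in adh_{r\pi}\F$ as the condition that $adh_\pi U \cap F \neq \varnothing$ for every $U \in \V_\pi(x)$ and every $F \in \F$; applied to $\pi$ with $\F^1$, it says $x \in adh_\pi \F^1$ iff $U \cap F' \neq \varnothing$ for every $U \in \V_\pi(x)$ and every $F' \in \F^1$. With this translation in hand, each inclusion becomes a concrete set-theoretic game.

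For $adh_{r\pi}\F \subseteq adh_\pi \F^1$, I would take $x$ on the left and fix arbitrary $U \in \V_\pi(x)$ and $F \in \F^1$. Since $F \in \F^1$ means $inh_\pi F \in \F$, the hypothesis supplies a point $y \in adh_\pi U \cap inh_\pi F$. The identity $z \in inh_\pi A \iff A \in \V_\pi(z)$ makes $F$ a vicinity of $y$, and $y \in adh_\pi U$ says every vicinity of $y$ meets $U$; applied to the vicinity $F$ this gives $F \cap U \neq \varnothing$.

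The reverse inclusion is the main obstacle, precisely because $inh_\pi$ need not be idempotent in a general pretopology, so the naive candidate $inh_\pi(X \setminus U)$ extracted from a witnessing pair need not itself lie in $\F^1$. The trick I would use is to enlarge instead of shrink. Arguing by contrapositive, I would pick $U \in \V_\pi(x)$ and $F \in \F$ with $adh_\pi U \cap F = \varnothing$, equivalently $F \subseteq X \setminus adh_\pi U = inh_\pi(X \setminus U)$, and then set $F' := X \setminus U$. Both $F'$ and $inh_\pi F' = inh_\pi(X \setminus U)$ are supersets of $F$ and hence lie in $\F$, which is precisely the statement that $F' \in \F^1$. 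Since $U \cap F' = \varnothing$, this contradicts $\V_\pi(x) \# \F^1$, and by contrapositive $x \in adh_\pi \F^1$ forces $x \in adh_{r\pi}\F$.

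The general identity $adh_{r\pi}\F^n = adh_\pi \F^{n+1}$ then follows by a one-line induction: the inductive definition immediately gives $(\F^n)^1 = \F^{n+1}$, so applying the base case to the filter $\F^n$ yields the desired equality.
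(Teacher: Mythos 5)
Your proof is correct and is essentially the paper's own argument: the inclusion $adh_\pi\F^1\subseteq adh_{r\pi}\F$ is handled there by the same observation that $U\cap F=\varnothing$ forces $adh_\pi U\cap inh_\pi F=\varnothing$ with $inh_\pi F\in\F$, and the reverse inclusion uses exactly your trick of passing to the superset $X\setminus U\supseteq F$, whose inherence $inh_\pi(X\setminus U)=X\setminus adh_\pi U$ also contains $F$, so that $X\setminus U\in\F^1$. The induction via $(\F^n)^1=\F^{n+1}$ is likewise the paper's closing remark, so there is nothing to add.
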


\begin{proof} Suppose that $x \notin adh_{r\pi} \F$.  Then there exists $U \in \mathcal{V}_\pi(x)$ and there exists $F \in \F$ such that $adh_\pi U \cap F = \varnothing$.  So, $F \subseteq X \setminus adh_\pi U = inh_\pi(X \setminus U)$.  By definition, it follows that $X \setminus U \in \F^1$.  Since $U \cap X \setminus U = \varnothing$, we have that $x \notin adh_\pi \F^1$.  Conversely, if $x \notin adh_\pi \F^1$, then there exists $U \in \mathcal{V}_\pi(x)$ and  $F \in \F^1$ such that $U \cap F = \varnothing$.  As we have seen before, it follows that $adh_\pi U \cap inh_\pi F = \varnothing$.  Since $F \in F^1$, we know that $inh_\pi F \in \F$.  It follows that $x \notin adh_{r\pi} \F$, as needed.  

The remainder of the lemma follows easily by setting $\F = \F^n$, in which case $\F^1 = \F^{n+1}$. \end{proof} 

\begin{definition} A filter $\F$ on a pretopological space is {\it inherent} if $inh_\pi F \not= \varnothing$ for each $F \in \F$.  If $\mathcal{U}$ is maximal with respect to the property of being inherent, we say that $\mathcal{U}$ is an {\it inherent ultrafilter}.\end{definition}

\begin{theorem} For a Hausdorff pretopological space $(X, \pi)$, the following are equivalent.
\begin{enumerate}
\item $X$ is PHC
\item whenever $\mathcal{C}$ is a $\pi$-cover of $X$, there exists $C_1,...,C_n \in \mathcal{C}$ such that $X = \bigcup_{i=1}^n adh_\pi C_i$
\item each inherent filter $\F$ on $X$ has nonempty adherence
\item for each filter $\F$ on $X$, $adh_\pi \F^1 \not= \varnothing$.
\end{enumerate}
\end{theorem}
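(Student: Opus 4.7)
The plan is to cycle (1) $\Rightarrow$ (2) $\Rightarrow$ (3) $\Rightarrow$ (1), and to handle (1) $\Leftrightarrow$ (4) separately. The equivalence (1) $\Leftrightarrow$ (4) is immediate from Lemma 4.5: since $adh_{r\pi}\F = adh_\pi \F^1$ for every filter $\F$, the assertion that $adh_{r\pi}\F \neq \varnothing$ for every $\F$, which is precisely (1), is literally (4).

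For (1) $\Leftrightarrow$ (2), I would first rephrase compactness of $(X,r\pi)$ in cover language: Proposition 3.9 applied to the principal filter $\langle X\rangle$ compact at $A = X$ shows that $(X,r\pi)$ is compact if and only if every $r\pi$-cover of $X$ admits a finite subcover. The bridge to $\pi$-covers is the fact that $\mathcal{V}_{r\pi}(x)$ is generated by $\{adh_\pi U : U \in \mathcal{V}_\pi(x)\}$: a $\pi$-cover $\mathcal{C}$ begets the $r\pi$-cover $\{adh_\pi C : C \in \mathcal{C}\}$, and conversely any $r\pi$-cover $\mathcal{D}$ gives rise, via witnesses $U_x \in \mathcal{V}_\pi(x)$ with some $D_x \in \mathcal{D}$ containing $adh_\pi U_x$, to a $\pi$-cover $\{U_x : x \in X\}$. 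Finite subcovers transfer in both directions through this dictionary, yielding (1) $\Leftrightarrow$ (2).

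For (2) $\Rightarrow$ (3) the argument is contrapositive. Suppose $\F$ is inherent with $adh_\pi\F = \varnothing$. Using the pretopological characterization $x \in adh_\pi\F$ iff $\mathcal{V}_\pi(x) \# \F$ from Proposition 3.4, for each $x \in X$ one picks $V_x \in \mathcal{V}_\pi(x)$ and $F_x \in \F$ with $V_x \cap F_x = \varnothing$; then $\{V_x : x \in X\}$ is a $\pi$-cover, so by (2) there exist $x_1,\dots,x_n$ with $X = \bigcup_i adh_\pi V_{x_i}$. From $F_{x_i} \subseteq X\setminus V_{x_i}$ comes $inh_\pi F_{x_i} \subseteq X\setminus adh_\pi V_{x_i}$; invoking the dual of Proposition 3.4(3) that $inh_\pi$ commutes with finite intersections, one finds $inh_\pi(F_{x_1}\cap\cdots\cap F_{x_n}) \subseteq X\setminus\bigcup_i adh_\pi V_{x_i} = \varnothing$, contradicting inherence. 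For (3) $\Rightarrow$ (1), the key observation is that $\F^1$ is always an inherent filter: each $F \in \F^1$ has $inh_\pi F \in \F$ by definition, hence $inh_\pi F \neq \varnothing$. So (3) forces $adh_\pi \F^1 \neq \varnothing$ for every filter $\F$, and Lemma 4.5 turns this into $adh_{r\pi}\F \neq \varnothing$, i.e., (1).

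The main obstacle, if any, is the cover-theoretic translation in (1) $\Leftrightarrow$ (2); one must carefully check that the dictionary between $\pi$-covers and $r\pi$-covers preserves both covers and finite subcovers in both directions. Verifying that $\F^1$ is a proper filter (not just a filter base), needed so that (3) applies to it without further manipulation, is a routine consequence of the additivity of $inh_\pi$ together with upward closure of $\F$.
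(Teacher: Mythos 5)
Your proposal is correct and rests on the same ingredients as the paper's proof: the lemma $adh_{r\pi}\mathcal{F} = adh_\pi\mathcal{F}^1$ for the equivalence with (4), a cover/filter duality for (1) $\Leftrightarrow$ (2), and the observation that $\mathcal{F}^1$ is inherent to close the loop. The only differences are organizational (the paper runs a single cycle $(1)\Rightarrow(2)\Rightarrow(3)\Rightarrow(4)\Rightarrow(1)$ and proves $(1)\Rightarrow(2)$ by building a filter from the sets $X\setminus adh_\pi U_x$ rather than via your $\pi$-cover/$r\pi$-cover dictionary), and both routes are sound.
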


\begin{proof} Let $\mathcal{C}$ be a $\pi$-cover of $X$. Without loss of generality, assume that $\mathcal{C} = \{U_x : x \in X\}$ where each $U_x \in \mathcal{V}_\pi(x)$. Suppose no such finite subcollection exists.  Then, $\mathcal{A} = \{X \setminus adh_\pi U_x : x \in X\}$ has fip.  Let $\F$ be the filter generated by $\mathcal{A}$.  For each $x \in X$, $x \in inh_{r\pi} adh_\pi U$ if and only if there exists $V \in \mathcal{V}_\pi(x)$ such that $adh_\pi V \subseteq adh_\pi U$.  Therefore, for each $x \in X$, $x \in inh_{r\pi}adh_\pi U_x$.  Thus, $adh_{r\pi} \F = X \setminus \bigcup_{x \in X} inh_{r\pi} adh_\pi U_x = \varnothing$, a contradiction.

Next, let $\F$ be a filter on $X$ such that $inh_\pi F \not= \varnothing$ for each $F \in \F$.  Suppose that $adh_\pi \F = \varnothing$.  Then, $\mathcal{C} = \{X \setminus F : F \in \F\}$ is a $\pi$-cover of $X$.  By assumption, there exist $F_1,...,F_n \in \mathcal{F}$ such that $adh_\pi(X \setminus F_1 \cup ... \cup X \setminus F_n) = X \setminus inh_\pi (F_1 \cap ... \cap F_n) = X$.  However, $F_1 \cap ... \cap F_n \in \mathcal{F}$ and thus by assumption $F_1 \cap ... \cap F_n$ has nonempty inherence, a contradiction.

Let $\F$ be a filter on $X$.  Notice that $\F^1$ is a filter on $X$ such that $inh_\pi F \not= \varnothing$ for each $F \in \mathcal{F}^1$.  Then, by assumption, $adh_\pi \F^1 \not= \varnothing$.  By the lemma,

Let $\F$ be a filter on $X$.  Then, $adh_{r\pi} \F = adh_{\pi} \F^1 \not= \varnothing$ by lemma 4.4.  Thus, we have shown that $(X, r\pi)$ is compact and the theorem is proven.\end{proof}

\subsection{$\theta$-quotient Convergence}

Let $(X, \pi)$ be a compact Hausdorff pretopological space, $Y$ a set and $f: (X, \pi) \to Y$ a surjection such that $f^\leftarrow(y)$ is cover-compact for each $y \in Y$.  For $A \subseteq X$, let $f^\#[A] = \{y \in Y : f^\la(y) \subseteq A\}$.  Define the {\it $\theta$-quotient convergence} $\sigma$ on $Y$ as follows: a filter $\F$ on $Y$ $\sigma$-converges to $y$ if and only if $f^\leftarrow(\F)$ is compact at $f^\leftarrow(y)$.

\begin{lemma} Let $\F$ be a filter on $Y$.  Then $\F \to_\sigma y$ if and only if $f^\leftarrow(\F) \supseteq \mathcal{V}_\pi(f^\leftarrow(y))$. \end{lemma}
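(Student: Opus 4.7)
The plan is to prove both implications by characterizing membership in $\mathcal{V}_\pi(f^\leftarrow(y))$ via Proposition 3.5: $V\in\mathcal{V}_\pi(f^\leftarrow(y))$ iff $f^\leftarrow(y)\subseteq \operatorname{inh}_\pi V$. The forward direction will be a clean argument from the definition of compact at a set, and the reverse direction is where the cover-compactness hypothesis on the fibers gets used via Proposition 4.12(c).

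For the forward direction, assume $f^\leftarrow(\F)$ is compact at $f^\leftarrow(y)$, and fix $V\in\mathcal{V}_\pi(f^\leftarrow(y))$. I want to show $V\in f^\leftarrow(\F)$, i.e.\ some $F\in\F$ has $f^\leftarrow(F)\subseteq V$. Suppose not: then $\{f^\leftarrow(F)\cap (X\setminus V):F\in\F\}$ has the finite intersection property, so it generates a filter $\G$ on $X$ refining $f^\leftarrow(\F)$. By hypothesis, $\operatorname{adh}_\pi\G\cap f^\leftarrow(y)\neq\varnothing$; pick $x$ in the intersection. Then $V\in\mathcal{V}_\pi(x)$ while $X\setminus V\in\G$, contradicting that $\mathcal{V}_\pi(x)\#\G$ (which is equivalent to $x\in\operatorname{adh}_\pi\G$ by Proposition 3.5).

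For the reverse direction, assume $\mathcal{V}_\pi(f^\leftarrow(y))\subseteq f^\leftarrow(\F)$, and let $\G$ be any filter on $X$ meeting $f^\leftarrow(\F)$. Suppose for contradiction that $\operatorname{adh}_\pi\G\cap f^\leftarrow(y)=\varnothing$. Since $f^\leftarrow(y)$ is cover-compact, Proposition 4.12(c) produces $W\subseteq X$ and $G\in\G$ with $f^\leftarrow(y)\subseteq \operatorname{inh}_\pi W$ and $W\cap G=\varnothing$. Then $W\in\mathcal{V}_\pi(f^\leftarrow(y))\subseteq f^\leftarrow(\F)$, so there is $F\in\F$ with $f^\leftarrow(F)\subseteq W$. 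But $\G\#f^\leftarrow(\F)$ forces $G\cap f^\leftarrow(F)\neq\varnothing$, contradicting $W\cap G=\varnothing$.

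I expect the only mildly delicate point to be recognizing that $\mathcal{V}_\pi(f^\leftarrow(y))$ is best handled through the identity $V\in\mathcal{V}_\pi(A)\iff A\subseteq\operatorname{inh}_\pi V$, and seeing that Proposition 4.12(c) is exactly the tool that lets cover-compactness of the fiber convert a pointwise vicinity into a \emph{single} set $W$ disjoint from some member of $\G$. The compactness of $(X,\pi)$ and the surjectivity of $f$ are not needed for this lemma itself; they are background hypotheses that will matter in the theorems that follow.
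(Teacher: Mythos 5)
Your proof is correct, but it takes the filter-theoretic route where the paper takes the cover-theoretic one. For the forward direction the paper observes that any $V \in \mathcal{V}_\pi(f^\leftarrow(y))$ is a one-element cover of $f^\leftarrow(y)$ and applies the cover characterization of ``compact at'' (the proposition stating that $\F$ is compact at $A$ iff every cover of $A$ admits a finite subfamily whose union contains some $F \in \F$), which makes $V \in f^\leftarrow(\F)$ immediate; you instead argue by contradiction from the raw definition of compactness at a set, building the auxiliary filter generated by $\{f^\leftarrow(F)\cap(X\setminus V) : F \in \F\}$ and using the equivalence $x\in adh_\pi\G \iff \mathcal{V}_\pi(x)\,\#\,\G$. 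For the reverse direction the paper takes an arbitrary cover of the fiber and uses cover-compactness in its defining (cover) form to produce a finite subfamily whose union is a vicinity of $f^\leftarrow(y)$, hence lies in $f^\leftarrow(\F)$; you use the equivalent filter form of cover-compactness (clause (c) of the cover-compactness proposition) to convert a filter $\G$ with $adh_\pi\G\cap f^\leftarrow(y)=\varnothing$ into a single vicinity $W$ of the fiber disjoint from some $G\in\G$, and then contradict $\G\,\#\,f^\leftarrow(\F)$. Both arguments are sound and of comparable length: the paper's is slightly slicker in the forward direction thanks to the one-element-cover trick, while yours makes transparent that cover-compactness of the fibers is needed only for the reverse implication and avoids invoking the cover characterization of ``compact at'' altogether. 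Your aside that compactness of $(X,\pi)$ and surjectivity of $f$ are not used here is accurate under the standing hypotheses, which guarantee the fibers are nonempty.
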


\begin{proof} 
Suppose that $f^\la(\F)$ is compact at $f^\la(y)$.  Then, whenever $\mathcal{C}$ is a cover of $f^\la(y)$.  Then, there exists $F \in \F$ and $C_1,...,C_n \in \mathcal{C}$ such that $f^\la[F] \subseteq \bigcup_{i=1}^n C_i$.  Let $V \in \mathcal{V}_\pi(f^\la(y))$.  By definition, $f^\la(y) \subseteq inh_\pi V$.  In other words, $\{V\}$ is a one-element cover of $f^\la(y)$.  Thus, $V \in f^\la(F)$, as needed.

Conversely, let $\mathcal{C}$ be a cover of $f^\la(y)$.  Since $f^\la(y)$ is cover-compact, we can find $C_1,...,C_n \in \mathcal{C}$ such that $f^\la(y) \subseteq inh_\pi\left(\bigcup_{i=1}^n C_i\right)$.  By definition, $C = \bigcup_{i=1}^n C_i \in \mathcal{V}_\pi(f^\la(y))$.  Thus, $C \in f^\la(F)$ and there exists $F \in \F$ such that $f^\la[F] \subseteq \bigcup_{i=1}^n C_i$ and $f^\la(\F)$ is compact at $f^\la(y)$.
\end{proof}

\begin{lemma} Let $(X, \pi)$ be a Hausdorff pretopology. If $A, B \subseteq X$ are disjoint cover-compact subsets of $X$, then there exist disjoint vicinities $U \in \mathcal{V}_\pi(A)$, $V \in \mathcal{V}_\pi(B)$.\end{lemma}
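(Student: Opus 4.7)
The plan is to mimic the standard argument that disjoint compact sets in a Hausdorff topological space can be separated by disjoint open neighborhoods, but with two adaptations: (i) pretopological Hausdorffness (proposition 3.6) gives separation of points by disjoint vicinities, and (ii) cover-compactness (definition 4.11) lets us replace ordinary finite unions of a cover by finite unions whose \emph{inherence} still contains the set, which is exactly what is needed to extract a vicinity of the whole set.

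First I would fix a point $a \in A$ and, for each $b \in B$, use Hausdorffness to pick disjoint vicinities $U_{a,b} \in \mathcal{V}_\pi(a)$ and $V_{a,b} \in \mathcal{V}_\pi(b)$. The family $\{V_{a,b} : b \in B\}$ is then a $\pi$-cover of $B$, so cover-compactness produces $b_1,\dots,b_n$ with $B \subseteq inh_\pi\bigl(\bigcup_i V_{a,b_i}\bigr)$. Setting $V_a = \bigcup_i V_{a,b_i}$ and $U_a = \bigcap_i U_{a,b_i}$, one checks by a quick distribution that $U_a \cap V_a = \varnothing$, while $U_a \in \mathcal{V}_\pi(a)$ (vicinity filters are filters) and $V_a \in \mathcal{V}_\pi(B)$ (by construction).

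Next I would repeat the trick one level up: $\{U_a : a \in A\}$ is a $\pi$-cover of $A$, so cover-compactness of $A$ yields $a_1,\dots,a_m$ with $A \subseteq inh_\pi\bigl(\bigcup_j U_{a_j}\bigr)$. Put $U = \bigcup_j U_{a_j}$ and $V = \bigcap_j V_{a_j}$. Then $U \in \mathcal{V}_\pi(A)$ directly, and $V \in \mathcal{V}_\pi(B)$ because finite intersections of vicinities of $B$ are again vicinities of $B$; this uses the dual of the multiplicativity $adh_\pi(E \cup F) = adh_\pi E \cup adh_\pi F$ from proposition 3.5, which gives $inh_\pi(E \cap F) = inh_\pi E \cap inh_\pi F$. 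Finally a one-line distribution shows $U \cap V \subseteq \bigcup_j (U_{a_j} \cap V_{a_j}) = \varnothing$.

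The only real obstacle is being attentive to the precise meaning of $\mathcal{V}_\pi(A)$: a set $W$ belongs to it iff $A \subseteq inh_\pi W$, which is exactly the output of cover-compactness rather than of ordinary compactness (where one would only get $A \subseteq \bigcup_i W_i$ without control on the inherence). Once one recognizes that cover-compactness is engineered precisely so that finite subcovers land inside the inherence, the proof is a routine two-step application of the topological argument.
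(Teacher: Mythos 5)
Your proof is correct and follows essentially the same route as the paper's: separate a single point from a cover-compact set using Hausdorffness plus a finite subcover landing in the inherence, then iterate over the other cover-compact set. The paper proves the point case and leaves the second step as an exercise, which you have simply written out in full (correctly, including the identity $inh_\pi(E \cap F) = inh_\pi E \cap inh_\pi F$ needed to see that finite intersections of vicinities of $B$ are vicinities of $B$).
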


\begin{proof}
First we show this holds for $B = \{x\}$.  For each $z \in A$, choose disjoint $U_z \in \mathcal{V}_\pi(z)$ and $V_z \in \mathcal{V}_\pi(x)$.  Since $A$ is cover-compact, by proposition 4.13(b), we can choose $z_1,...,z_n \in A$ such that $A \subseteq inh_\pi\left(\bigcup_{i=1}^n U_{z_i}\right)$.  Thus, $U = \bigcap_{i=1}^n U_{z_i} \in \mathcal{V}_\pi(A)$.  Also, $V = \bigcap_{i=1}^n V_{z_i} \in \mathcal{V}_\pi(x)$ and $U \cap V = \varnothing$.  Again using proposition 4.13(b), it is a straight-forward exercise to now show this holds for disjoint cover-compact sets, $A$ and $B$.
\end{proof}

\begin{proposition} $(Y, \sigma)$ is a Hausdorff pretopology.  Further, for each $y \in Y$, $\mathcal{V}_\pi(y)$ is the filter generated by $\{f^\#[W] : W \in \mathcal{V}_\pi(f^\leftarrow(y))\}$. \end{proposition}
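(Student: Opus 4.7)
My plan is to first verify the two convergence-space axioms for $\sigma$, then pin down the vicinity filter $\mathcal{V}_\sigma(y)$ explicitly, and finally read off both the pretopology property and the Hausdorff property from that description.

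For the two convergence axioms, the principal filter $\langle y \rangle$ pulls back to $\langle f^\leftarrow(y) \rangle$; since $f^\leftarrow(y)$ is cover-compact (hence compact), $\langle f^\leftarrow(y) \rangle$ is compact at $f^\leftarrow(y)$, so $\langle y \rangle \to_\sigma y$. Monotonicity is immediate, because $\F \subseteq \G$ implies $f^\leftarrow(\F) \subseteq f^\leftarrow(\G)$ and ``compact at $f^\leftarrow(y)$'' is preserved on passing to a finer filter.

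The main step is to show $\mathcal{V}_\sigma(y) = \langle \mathcal{B}_y \rangle$, where $\mathcal{B}_y = \{f^\#[W] : W \in \mathcal{V}_\pi(f^\leftarrow(y))\}$. First, $\mathcal{B}_y$ is a proper filter base: it is downward directed because $f^\#[W_1] \cap f^\#[W_2] = f^\#[W_1 \cap W_2]$ (both sides are $\{z : f^\leftarrow(z) \subseteq W_1 \cap W_2\}$), and each $f^\#[W]$ contains $y$, since $W \in \mathcal{V}_\pi(f^\leftarrow(y))$ forces $f^\leftarrow(y) \subseteq \mathrm{inh}_\pi W \subseteq W$. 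To see $\langle \mathcal{B}_y \rangle \to_\sigma y$, I invoke Lemma 4.17: the pullback $f^\leftarrow(\langle \mathcal{B}_y \rangle)$ has a base of sets $f^\leftarrow(f^\#[W]) \subseteq W$, so $f^\leftarrow(\langle \mathcal{B}_y \rangle) \supseteq \mathcal{V}_\pi(f^\leftarrow(y))$. Conversely, if $\F \to_\sigma y$, Lemma 4.17 gives $W \in f^\leftarrow(\F)$ for every $W \in \mathcal{V}_\pi(f^\leftarrow(y))$, so some $F \in \F$ satisfies $f^\leftarrow[F] \subseteq W$, which forces $F \subseteq f^\#[W]$ and hence $f^\#[W] \in \F$. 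Combining the two inclusions identifies $\mathcal{V}_\sigma(y)$ with $\langle \mathcal{B}_y \rangle$, and since this filter itself $\sigma$-converges to $y$, $\sigma$ is a pretopology.

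For Hausdorffness, given $y_1 \neq y_2$, the fibers $f^\leftarrow(y_1)$ and $f^\leftarrow(y_2)$ are disjoint cover-compact subsets of the Hausdorff pretopological space $(X, \pi)$, so Lemma 4.18 yields disjoint vicinities $W_i \in \mathcal{V}_\pi(f^\leftarrow(y_i))$. Then $f^\#[W_1] \cap f^\#[W_2] = f^\#[W_1 \cap W_2] = f^\#[\varnothing] = \varnothing$ while $f^\#[W_i] \in \mathcal{V}_\sigma(y_i)$ by the explicit description, which is the disjoint-vicinity criterion of Proposition 3.6.

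The only place that requires care is keeping $f^\#$ and $f^\leftarrow$ straight: the two identities $f^\leftarrow(f^\#[W]) \subseteq W$ and $f^\#[W_1] \cap f^\#[W_2] = f^\#[W_1 \cap W_2]$ are what let Lemma 4.17 convert the abstract ``compactness at the fiber'' definition of $\sigma$ into a concrete vicinity base downstairs in $Y$. Once that dictionary is set up there is no substantive obstacle; everything else is bookkeeping.
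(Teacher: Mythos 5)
Your proof is correct and follows essentially the same route as the paper: both identify $\mathcal{V}_\sigma(y)$ with the filter generated by $\{f^\#[W] : W \in \mathcal{V}_\pi(f^\leftarrow(y))\}$ via the lemma characterizing $\sigma$-convergence as $f^\leftarrow(\F) \supseteq \mathcal{V}_\pi(f^\leftarrow(y))$, verify that this filter itself converges using $f^\leftarrow[f^\#[W]] \subseteq W$, and obtain Hausdorffness from the separation lemma for disjoint cover-compact fibers together with $f^\#[W_1] \cap f^\#[W_2] = \varnothing$. Your write-up is slightly more explicit than the paper's (checking the convergence axioms and that the $f^\#[W]$ form a filter base), but there is no substantive difference.
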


\begin{proof} We first show that $\sigma$ is indeed a pretopology.  Notice that for $y \in Y$, $\bigcap \{\F : \F \to_\sigma y\} = \bigcap\{\F : V \in \mathcal{V}_\pi(f^\la(y)) \text{ implies } f^\#[V] \in \F\}$.  It follows that $\mathcal{V}_\sigma(y)$ is the filter generated by $\{f^\#[U] : U \in \mathcal{V}_\pi(f^\la(y))\}$.  For any $A \subseteq X$, $f^\la[f^\#[A]] \subseteq A$.  It follows easily that $f^\la(\mathcal{V}_\sigma(y)) \supseteq \mathcal{V}_\pi(f^\la(y))$.  By lemma 4.16, then, $\mathcal{V}_\sigma(y) \to_\sigma y$ and $\sigma$ is a pretopology with the stated vicinity filters.

Now, if $y_1 \not= y_2$, by lemma (4.17), for $i = 1,2$, we can find $U_i \in \mathcal{V}_\pi(f^\la(y_i))$ such that $U_1 \cap U_2 = \varnothing$.  It is immediate that $f^\#[U_1] \cap f^\#[U_2] = \varnothing$ and $\sigma$ is Hausdorff.\end{proof}

\begin{definition} Let $U$ and $V$ be subsets of $X$ with nonempty inherence.  A function $f:(X, \pi) \to (Y, \tau)$ between pretopological spaces is {\it strongly irreducible} if whenever $U \cap V \not= \varnothing$, there exists $y \in Y$ such that $f^\leftarrow(y) \subseteq U \cap V$.  

The function $f$ is {\it weakly $\theta$-continuous (w$\theta$-continuous} for short) if $f:(X, \pi) \to (Y, r\tau)$ is continuous.\end{definition}


\begin{theorem} If $(X, \pi)$ is a compact, Hausdorff pretopological space, $f:(X, \pi) \to Y$ an strongly irreducible, surjection such that $f^\leftarrow(y)$ is cover-compact for each $y \in Y$ and $\sigma$ is the $\theta$-quotient pretopology on $Y$, then $f:(X, \pi) \to (Y, \sigma)$ is w$\theta$-continuous and $(Y, \sigma)$ is a PHC Hausdorff pretopological space.\end{theorem}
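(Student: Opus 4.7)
The statement has two parts: (a) $f:(X,\pi)\to (Y,\sigma)$ is $w\theta$-continuous, that is, continuous when viewed as a map into $(Y,r\sigma)$, and (b) $(Y,\sigma)$ is PHC. Proposition 4.19 has already established that $\sigma$ is a Hausdorff pretopology and that each $\mathcal{V}_\sigma(y)$ is generated by $\{f^\#[W]:W\in\mathcal{V}_\pi(f^\la(y))\}$, so for (b) the only remaining task is to verify that $(Y,r\sigma)$ is compact. My plan is to prove (a) directly from the vicinity descriptions together with strong irreducibility, and then to derive compactness of $(Y,r\sigma)$ by lifting a filter on $Y$ back to $X$ along $f$ and invoking compactness of $(X,\pi)$ and the continuity just obtained.

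For (a), I would apply Proposition 3.7(e). Fix $x\in X$ and $V\in\mathcal{V}_{r\sigma}(f(x))$; by the definition of $r\sigma$ together with Proposition 4.19, I may assume $V\supseteq adh_\sigma W$ for some $W\in\mathcal{V}_\sigma(f(x))$, and in turn that $W\supseteq f^\#[U']$ for some $U'\in\mathcal{V}_\pi(f^\la(f(x)))$. Since $x\in f^\la(f(x))$, already $U'\in\mathcal{V}_\pi(x)$, so it suffices to verify $f[U']\subseteq adh_\sigma W$. Pick $z\in U'$ and any basic $\sigma$-vicinity $f^\#[T]$ of $f(z)$ with $T\in\mathcal{V}_\pi(f^\la(f(z)))$; then $T\in\mathcal{V}_\pi(z)$, so $T\cap U'\in\mathcal{V}_\pi(z)$ and in particular contains $z$ and has nonempty inherence. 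Strong irreducibility then produces some $y'\in Y$ with $f^\la(y')\subseteq T\cap U'$; because $f^\#$ commutes with finite intersections, $y'\in f^\#[T]\cap f^\#[U']\subseteq f^\#[T]\cap W$. Thus every basic $\sigma$-vicinity of $f(z)$ meets $W$, whence $f(z)\in adh_\sigma W\subseteq V$.

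For (b), let $\G$ be any filter on $Y$. Because $f$ is surjective, $\{f^\la[G]:G\in\G\}$ is a filter base on $X$; write $\mathcal{H}$ for the filter it generates. By compactness of $(X,\pi)$ there is some $x\in adh_\pi\mathcal{H}$, witnessed by a filter $\mathcal{K}$ on $X$ with $\mathcal{K}\#\mathcal{H}$ and $\mathcal{K}\to_\pi x$. Then $f(\mathcal{K})\#\G$ (for each $K\in\mathcal{K}$ and $G\in\G$ one has $K\cap f^\la[G]\ne\varnothing$, so $f[K]\cap G\ne\varnothing$) and, by the continuity from (a), $f(\mathcal{K})\to_{r\sigma}f(x)$, so $f(x)\in adh_{r\sigma}\G$. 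Hence $(Y,r\sigma)$ is compact and $(Y,\sigma)$ is PHC. The main obstacle is the continuity step: one has to thread through two layers of vicinity descriptions and then deploy strong irreducibility on an intersection of $\pi$-vicinities at $z$ in a way that produces a $Y$-point whose fibre sits inside both $T$ and $U'$, which is exactly what forces $f[U']$ into $adh_\sigma W$ rather than merely into $W$.
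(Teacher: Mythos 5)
Your proposal is correct and follows essentially the same route as the paper: the $w\theta$-continuity step is exactly the paper's argument (apply strong irreducibility to the intersection of the two $\pi$-vicinities to produce a point of $Y$ whose fibre lies in both, forcing $f[U']$ into $adh_\sigma W$), and your part (b) merely writes out in full the fact the paper cites in one line, that the continuous image of a compact convergence space is compact.
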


\begin{proof}
For $x \in X$, let $V \in \mathcal{V}_\sigma(f(x))$.  Without loss of generality, we can assume that $V = f^\#[W]$ for some $W \in \mathcal{V}_\pi(f^\la(f(x)))$.  Note that in this case $x \in inh_\pi W$.  Supose that $w \in W$ and $f(w) \in f^\#[U]$ for some $U \in \mathcal{V}_\pi(f^\la(f(w)))$.  Notice that $w \in W \cap U$, so $W \cap U \not= \varnothing$.  Since $f$ is strongly irreducible, we can find $y \in f^\#[U] \cap f^\#[W]$.  Therefore, $f(w) \in adh_\pi f^\#[W]$.  In particular, $f[W] \subseteq adh_\pi f^\#[W]$ and $f$ is w$\theta$-continuous.

Since the continuous image of a compact space is again compact, $(Y, r\sigma)$ is compact and by definition $(Y, \sigma)$ is PHC.
\end{proof}

\subsection{PHC Extensions of X}

By an extension of $X$, we mean a convergence space $(Y, \xi)$ which contains $X$ as a subspace such that $adh_\xi X = Y$.  There is an ordering on the family extensions of $X$.  If $(Y,\xi)$ and $(Z, \zeta)$ are extensions of $X$, we say that {\it $Y$ is projectively larger than $Z$}, written $Y \geq_X Z$ if there exists a continuous map $f:(Y, \xi) \to (Z, \zeta)$ which fixes the points of $X$.

We borrow from topology the concepts of {\it strict} and {\it simple} extensions.  If $Y$ is an extension of $(X,\pi)$, we define $Y^+$ a new extension of $X$ on the same underlying set.   For $p \in Y^+$, $\V_+(p)$ is the filter generated by $\{\{p\} \cup U : \exists W \in \V_Y(p), W \cap X = U\}$.  If $Y = Y^+$, then we say $Y$ is a {\it strict extension} of $X$.

In a similar way, we define $Y^\#$, an extension of $X$ on the same set as $Y$.  If $A \subseteq X$, let $o(A) = \{p \in Y : \exists W \in \V_Y(p), W \cap X = A\}$.  If $p \in Y^\#$, then $V_\#(p)$ is the filter generated by $\{oA : \exists V \in \V_Y(p), V \cap X = A\}$.

\begin{lemma} If $Y$ is an extension of $X$, then $Y^\# \leq Y \leq Y^+$.\end{lemma}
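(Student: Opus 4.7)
The plan is to reduce both inequalities to set-theoretic containments of vicinity filters.  Since $Y^\#$ and $Y^+$ are pretopologies defined by their vicinity filters, and since $\tau\leq\xi$ for pretopologies is equivalent to $\V_\tau(p)\subseteq\V_\xi(p)$ at every point, it suffices to establish
\[
  \V_\#(p)\subseteq\V_Y(p)\subseteq\V_+(p)
\]
for each $p\in Y$.  Equivalently, the identity maps $Y^+\to Y$ and $Y\to Y^\#$ are continuous.

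The inequality $Y\leq Y^+$ is essentially formal.  Given $W\in\V_Y(p)$, the basic generator $\{p\}\cup(W\cap X)$ of $\V_+(p)$ is contained in $W$, because $p\in W$ and $W\cap X\subseteq W$.  Upward closure of the generated filter then places $W$ itself in $\V_+(p)$, so $\V_Y(p)\subseteq\V_+(p)$.

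For $Y^\#\leq Y$, it suffices to show that each generator $o(V\cap X)$ of $\V_\#(p)$, with $V\in\V_Y(p)$, belongs to $\V_Y(p)$.  My plan is to establish the set-theoretic containment $V\subseteq o(V\cap X)$: for each $q\in V$, I aim to exhibit a vicinity $W\in\V_Y(q)$ with $W\cap X=V\cap X$, which by the definition of $o$ places $q$ in $o(V\cap X)$.  Once this containment is in hand, $V\in\V_Y(p)$ together with upward closure of $\V_Y(p)$ immediately gives $o(V\cap X)\in\V_Y(p)$.

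The step I expect to require the most care is producing the witnessing vicinity $W\in\V_Y(q)$ with $W\cap X=V\cap X$ for an arbitrary $q\in V$: the natural choice $W=V$ works when $q=p$, and for other $q\in V$ one must appeal to the extension hypothesis $adh_Y X=Y$ together with the trace filter of $\V_Y(q)$ on $X$ to extract a vicinity of $q$ whose trace matches $V\cap X$.  Everything else reduces to straightforward filter manipulations.
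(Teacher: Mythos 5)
Your first half is fine: the inequality $Y\leq Y^+$ does reduce to the observation that $\{p\}\cup(W\cap X)\subseteq W$ for $W\in\V_Y(p)$ (note $p\in W$ because $\operatorname{inh}W\subseteq W$), so upward closure gives $\V_Y(p)\subseteq\V_+(p)$. This is exactly the routine check the paper waves at. The problem is the second half. Your entire argument for $Y^\#\leq Y$ rests on the containment $V\subseteq o(V\cap X)$, and that containment is precisely the pretopological analogue of the topological fact that $oU$ is open in an extension. In the topological setting it holds because an open $V$ is a neighborhood of \emph{every} point $q\in V$, so $V$ itself is the witness $W\in\V_Y(q)$ with $W\cap X\subseteq V\cap X$. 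In a pretopological space a vicinity of $p$ need not be a vicinity of any other of its points, and the extension hypothesis $adh_Y X=Y$ gives you nothing here: it only says the trace of $\V_Y(q)$ on $X$ is a proper filter, not that this trace filter contains $V\cap X$. So the step you flag as "requiring the most care" is not a gap to be filled by more care; it is false.

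Concretely, note that $o(A)\cap X=\operatorname{inh}_\pi A$ (a point $q\in X$ lies in $o(A)$ iff $A$ belongs to the trace of $\V_Y(q)$, i.e.\ iff $A\in\V_\pi(q)$). Take $X$ to be the Arens pretopology with underlying set $(\naturals\times\naturals)\cup\naturals$, where $\V(n)$ is generated by the sets $\{n\}\cup\{(n,m):m\geq k\}$ and the points $(n,m)$ are isolated, and let $Y=X\cup\{\omega\}$ with $\V_Y(\omega)$ generated by the sets $\{\omega\}\cup(\naturals\setminus F)$, $F$ finite. This is a Hausdorff extension. For $V=\{\omega\}\cup\naturals\in\V_Y(\omega)$ one computes $o(V\cap X)=o(\naturals)=\{\omega\}$, since no point $n$ has a vicinity whose trace on $X$ avoids the second-level points $(n,m)$. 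So $V\not\subseteq o(V\cap X)$, and indeed $o(V\cap X)=\{\omega\}\notin\V_Y(\omega)$, so $\V_\#(\omega)\not\subseteq\V_Y(\omega)$. This shows not only that your intermediate claim fails but that the inequality $Y^\#\leq Y$, read literally with the paper's definitions, requires some additional hypothesis (e.g.\ that the vicinity filters of points of $Y$ have bases of sets which are vicinities of each of their points, as automatically happens for topological extensions); no amount of filter manipulation will recover it in the stated generality.
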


\begin{proof}
In both cases it is straight-forward to check that the identity map is continuous and fixes $X$.
\end{proof}

\begin{proposition} Suppose that $(X, \pi)$ is a Hausdorff pretopological space and $Y$ is a compactification of $X$.  Then $Y^+$ is PHC. \end{proposition}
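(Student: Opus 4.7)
The strategy is to verify characterization (c) of Theorem 4.6: it suffices to show that $(Y^+, \pi^+)$ is Hausdorff and that every inherent filter on $Y^+$ has nonempty $\pi^+$-adherence.

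Hausdorffness is immediate. A compactification $Y$ is, by convention, a compact Hausdorff extension, and Lemma 4.12 gives $Y \leq Y^+$; since any $Y^+$-limit of a filter is also a $Y$-limit, the Hausdorff property of $Y$ transfers to $Y^+$.

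Given an inherent filter $\F$ on $Y^+$, I would first observe that each $F \in \F$ meets $X$: if $q \in inh_{\pi^+} F$ then $F \in \V_+(q)$, so $F$ contains a basic vicinity $\{q\} \cup (W \cap X)$ with $W \in \V_Y(q)$, and density of $X$ in $Y$ (built into the definition of extension, since $adh_Y X = Y$) forces $W \cap X \not= \varnothing$. Thus $\{F \cap X : F \in \F\}$ is a filter base on $X$, whose generated filter $\F_X$ may be regarded as a filter on $Y$. Compactness of $Y$ then yields an ultrafilter $\U \supseteq \F_X$ on $Y$ with $\U \to_Y p$ for some $p \in Y$, and in particular $\V_Y(p) \subseteq \U$.

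Finally, I would verify $p \in adh_{\pi^+} \F$ by checking $\V_+(p) \# \F$. A basic $\pi^+$-vicinity at $p$ has the form $\{p\} \cup (W \cap X)$ with $W \in \V_Y(p)$; since $W \in \U$ and $F \cap X \in \U$, the intersection $W \cap F \cap X$ is a nonempty element of $\U$, so the vicinity meets $F$. The main delicate step is choosing the right filter to feed into the compactness of $Y$ — the restriction $\F_X$ rather than $\F$ itself — since the interplay between the finer structure on $Y^+$, the convergence on $Y$, and the subspace $X$ is what could most easily trip one up.
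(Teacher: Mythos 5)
Your proof is correct, but it takes a genuinely different route from the paper's. The paper argues structurally at the level of convergences: it computes the adherences of the basic vicinities $\{p\} \cup U$ of $Y^+$, observes that the resulting vicinity filters of the partial regularization $rY^+$ are contained in those of $Y^\#$, and concludes $rY^+ \leq Y^\# \leq Y$; since compactness passes to coarser convergences, $rY^+$ is compact and $Y^+$ is PHC directly from Definition 4.2. You instead verify the internal characterization of Theorem 4.6(c), tracing an inherent filter on $Y^+$ down to its trace filter on $X$, converging an ultrafilter refining it via compactness of $Y$, and checking that the limit point is in the $\pi^+$-adherence of the original filter. Your route avoids the $Y^\#$ construction and the computation of $adh_{Y^+}(\{p\}\cup U)$ entirely, and it has the merit of making explicit both the Hausdorffness of $Y^+$ (which the paper's proof leaves tacit) and the role of density of $X$ in $Y$; the cost is that it leans on the equivalence $(3)\Leftrightarrow(1)$ of Theorem 4.6, whereas the paper's comparison $rY^+ \leq Y^\# \leq Y$ is a sharper structural statement that situates all three canonical extensions relative to one another and yields the proposition as an immediate corollary. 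Your key steps all check out: each $F \in \mathcal{F}$ with nonempty inherence must meet $X$ because a basic vicinity $\{q\} \cup (W \cap X)$ has $W \cap X \neq \varnothing$ by $adh_Y X = Y$; and the final meshing argument works because both $W \cap X$ and $F \cap X$ lie in the convergent ultrafilter $\mathcal{U}$.
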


\begin{proof}
Fix $p \in Y^+$ and let $\{p\} \cup U$ be a vicinity of $p$ in $Y^+$.  Then, $adh_{Y^+}(\{p\} \cup U) = oU \cup adh_\pi U$. So, in the partial regularization of $Y^+$, the vicinity filters are generated by sets of the form $oU \cup adh_\pi U$ for $U \subseteq X$.  In particular, this shows that $\mathcal{V}_{rY^+}(p) \subseteq \mathcal{V}_{Y^\#}(p)$ for each $p \in Y$.  Since $Y^\#$ has a coarser pretopology than $Y$, it follows that the partial regularization of $Y^+$ is coarser than $Y$.   Since $Y$ is compact, so is $rY^+$ and by definition, $Y^+$ is PHC.
\end{proof}


For any Hausdorff convergence space $X$, Richardson \cite{Richardson 1970} constructs a compact, Hausdorff convergence space $X^*$ in which $X$ is densely embedded.  If $X$ is a pretopology, then so is $X^*$.  It is said that $X$ is {\it regular} if $\F \to x$ implies that $\{adh F : F \in \F\} \to x$.  Richardson \cite{Richardson 1970} proves the following:

\begin{theorem} If $X$ is a Hausdorff convergence space, $Y$ is a compact, Hausdorff, regular convergence space and $f:X \to Y$ is continuous, then there exists a unique continuous map $F:X^* \to Y$ extending $f$. \end{theorem}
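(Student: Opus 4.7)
The plan starts with uniqueness, which is quick and uses only density and Hausdorffness of $Y$. If $F_1, F_2 : X^* \to Y$ are continuous extensions of $f$ and $p \in X^*$, density of $X$ in $X^*$ yields a filter $\mathcal{G}$ on $X^*$ meeting $X$ and converging to $p$; its trace $\mathcal{G}_0 = \{G \cap X : G \in \mathcal{G}\}$ is a filter on $X$ whose $X^*$-extension $\hat{\mathcal{G}_0}$ refines $\mathcal{G}$, and so it also converges to $p$ by the second convergence axiom. Continuity of $F_i$ gives $F_i(\hat{\mathcal{G}_0}) \to F_i(p)$ in $Y$, and since $F_i|_X = f$, both image filters coincide with $f(\mathcal{G}_0)$; Hausdorffness of $Y$ then forces $F_1(p) = F_2(p)$.

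\textbf{Definition of $F$.} For existence, set $F|_X = f$, and for $p \in X^* \setminus X$ construct $F(p)$ as follows. Choose a filter $\mathcal{G}_0$ on $X$ with $\hat{\mathcal{G}_0} \to p$ in $X^*$ (available by density) and refine it to an ultrafilter $\mathcal{U}$ on $X$; the extension $\hat{\mathcal{U}}$ still converges to $p$ by the refinement axiom. The pushforward $f(\mathcal{U})$ is an ultrafilter on $Y$, so by compactness it has nonempty adherence; since any adherence point of an ultrafilter is actually a limit of that ultrafilter, Hausdorffness of $Y$ supplies a unique point $q$, and we set $F(p) = q$.

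\textbf{Well-definedness and continuity (main obstacle).} The central difficulty is to show that $F(p)$ does not depend on the choice of ultrafilter, and that the resulting $F$ is continuous. Regularity of $Y$ is where the work is done: it guarantees that $\mathcal{V} \to y$ implies $\{\operatorname{adh}_Y V : V \in \mathcal{V}\} \to y$, so any convergence in $Y$ is witnessed by adherences of members of the filter. Given two ultrafilters $\mathcal{U}_1, \mathcal{U}_2$ on $X$ whose extensions converge to the same $p$, if the pushforwards satisfied $f(\mathcal{U}_i) \to q_i$ with $q_1 \neq q_2$, then the Hausdorff separation of $q_1,q_2$ together with the adherence-upgrade from regularity would produce disjoint $X$-level members of $\mathcal{U}_1$ and $\mathcal{U}_2$ whose existence is incompatible with the shared $X^*$-convergence to $p$, giving a contradiction. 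Continuity of $F$ is then proved by the same pattern: for any filter $\mathcal{H} \to p$ in $X^*$, an ultrafilter refinement gives $F(p)$ as the limit of its $f$-image, and regularity of $Y$ promotes this back to $F(\mathcal{H}) \to F(p)$. The main obstacle is precisely this interaction between the given convergence structure on $X^*$ and the regularity of $Y$: the former provides a common point $p$, the latter allows us to reconcile the various ultrafilter-limit candidates into a single, well-defined and continuous $F$.
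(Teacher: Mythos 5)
Your uniqueness argument is fine, but the existence half has a genuine gap: you treat $X^*$ as a black-box dense compactification and never use its actual structure, and the theorem cannot be proved that way --- the universal property in question fails for a general compactification of $X$ (the paper itself notes, citing Kent--Richardson, that a convergence space need not have a projectively maximal compactification), so any correct proof must use what $X^*$ specifically is. In Richardson's construction the points of $X^*\setminus X$ \emph{are} the free ultrafilters on $X$, and a filter on $X^*$ converges to such a point $\mathcal{U}$ only if it contains $\hat{\mathcal{U}}\cap\langle\mathcal{U}\rangle$; consequently the only ultrafilter on $X$ whose extension converges to $\mathcal{U}$ is $\mathcal{U}$ itself, so well-definedness of $F(\mathcal{U})=\lim f(\mathcal{U})$ is automatic and needs no regularity. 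Your substitute argument for well-definedness does not work as stated: two \emph{distinct} ultrafilters on $X$ always contain disjoint members (a set and its complement), so producing ``disjoint $X$-level members of $\mathcal{U}_1$ and $\mathcal{U}_2$'' contradicts nothing unless the definition of convergence in $X^*$ is invoked, which you never do. Moreover, in a bare convergence space Hausdorffness only says limits are unique; it does not hand you separating vicinities for $q_1\neq q_2$.

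The second gap is in continuity. Deducing $F(\mathcal{H})\to F(p)$ from the behaviour of ultrafilter refinements of $\mathcal{H}$ requires $Y$ to be a pseudotopology, which is not among the hypotheses. The place regularity is actually needed is different: for $\mathcal{H}\to\mathcal{U}\in X^*\setminus X$ one has $F(\mathcal{H})\supseteq f(\mathcal{U})\cap\langle F(\mathcal{U})\rangle$, and one must show this last filter converges to $F(\mathcal{U})$. Since $F(\mathcal{U})\in adh_Y A$ for every $A\in f(\mathcal{U})$, the filter generated by $\{adh_Y A : A\in f(\mathcal{U})\}$ is coarser than $f(\mathcal{U})\cap\langle F(\mathcal{U})\rangle$, and regularity of $Y$ says precisely that this coarser filter converges to $F(\mathcal{U})$. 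For comparison: the paper does not reprove this theorem (it cites Richardson), but its proof of the analogous extension theorem for $\kappa X$ uses exactly the explicit definition $F(\mathcal{U})=\lim f(\mathcal{U})$ when $f(\mathcal{U})$ converges, together with vicinities of $\mathcal{U}$ of the form $\{\mathcal{U}\}\cup U$ with $U\in\mathcal{U}$ --- that concrete description of $X^*$ and of $F$ on $X^*\setminus X$ is what your sketch is missing.
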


We seek to circumvent the assumption of regularity on $Y$.  For a Hausdorff pretopological space $X$, let $\kappa X = (X^*)^+$.  By the above proposition, $\kappa X$ is PHC.  Additionally, $\kappa X$ has the following property.

\begin{theorem} If $f:(X,\pi) \to (Y, \tau)$ is continuous, then there exists a continuous function $F:\kappa_\pi X \to \kappa_\tau Y$ which extends $f$. \end{theorem}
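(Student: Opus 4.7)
The plan is to build $F$ in two stages, exploiting the fact that $\kappa_\pi X = (X^*)^+$ and $X^*$ share an underlying set (and similarly for $Y$). First I would produce a set function $F : X^* \to Y^*$ extending $f$, and then verify that this same $F$ is continuous as a map $(X^*)^+ \to (Y^*)^+$.

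For the first stage, composing $f$ with the dense embedding $Y \hookrightarrow Y^*$ yields a continuous map $g : X \to Y^*$. A direct appeal to Theorem 4.23 would produce an extension to $X^*$, but this requires $Y^*$ to be regular, precisely the hypothesis this section aims to circumvent. I would instead construct $F$ pointwise using ultrafilters: for each $p \in X^* \setminus X$, choose a filter $\mathcal{H}_p$ on $X$ whose canonical extension to $X^*$ converges to $p$ (possible by density), refine to an ultrafilter $\U_p$, and let $F(p)$ be the unique $Y^*$-limit of the extension of $f(\U_p)$, which is an ultrafilter on $Y$ since $f$ is a function, and hence converges uniquely in the compact Hausdorff pretopology $Y^*$.

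For the second stage, I would fix $p \in X^*$ and a basic $(Y^*)^+$-vicinity of $F(p)$, which has the form $\{F(p)\} \cup (W' \cap Y)$ for some $Y^*$-vicinity $W'$ of $F(p)$, and exhibit a basic $(X^*)^+$-vicinity $\{p\} \cup (W \cap X)$ of $p$ whose $F$-image lies inside. Since $F$ agrees with $f$ on $X$, this amounts to finding a $X^*$-vicinity $W$ of $p$ with $f[W \cap X] \subseteq W'$. For $p \in X$ this is exactly continuity of $f$; for $p \in X^* \setminus X$ it demands that the trace filter $\mathcal{V}_{X^*}(p) \cap X$ be sent by $f$ to a filter converging to $F(p)$ in $Y^*$.

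The crux of the proof is harmonizing the two stages: a generic ultrafilter refinement $\U_p$ only guarantees convergence of $f(\U_p)$, not of the coarser trace $f(\mathcal{V}_{X^*}(p) \cap X)$. To close the argument I would exploit that $(Y^*)^+$ is a strict extension, so that its vicinities at a point are entirely determined by traces on $Y$, and then choose $F(p)$ so that its $Y^*$-vicinity trace coincides with $f$ applied to the $X^*$-vicinity trace at $p$. Verifying that such a compatible $F(p)$ always exists, using Richardson's construction of $X^*$ and the strictness of $(Y^*)^+$, is the main technical obstacle.
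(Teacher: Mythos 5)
Your overall strategy---define $F$ on the added points via images of ultrafilters under $f$, then verify continuity against the basic vicinities $\{p\} \cup (W \cap X)$ of $(X^*)^+$ and $\{F(p)\} \cup (W' \cap Y)$ of $(Y^*)^+$---is exactly the paper's. But as written the proposal has a genuine gap: you explicitly defer, as ``the main technical obstacle,'' the verification that $F(p)$ can be chosen compatibly with the trace filter $\mathcal{V}_{X^*}(p) \cap X$, and you never resolve it. So what you have is a plan rather than a proof.

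The good news is that the obstacle is illusory once you open up Richardson's construction instead of treating $X^*$ as a black box. The underlying set of $X^*$ is $X \cup X'$, where $X'$ is the set of free (nonconvergent) ultrafilters on $X$, and for $p = \mathcal{U} \in X'$ the vicinity filter of $p$ in $X^*$ is generated by $\{\{p\} \cup U : U \in \mathcal{U}\}$. Hence the trace $\mathcal{V}_{X^*}(p) \cap X$ is $\mathcal{U}$ itself---already an ultrafilter---and any filter $\mathcal{H}_p$ on $X$ whose extension converges to $p$ satisfies $\mathcal{H}_p \supseteq \mathcal{U}$, so your ``chosen refinement'' $\mathcal{U}_p$ is forced to equal $\mathcal{U}$. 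There is therefore no slack between $f(\mathcal{U}_p)$ and $f(\mathcal{V}_{X^*}(p) \cap X)$: both equal the ultrafilter $f(\mathcal{U})$ on $Y$. Either $f(\mathcal{U})$ converges in $(Y,\tau)$ to some $y$ (unique by Hausdorffness), in which case set $F(p) = y$ and note $f(\mathcal{U}) \supseteq \mathcal{V}_\tau(y)$, which is precisely the family of $Y$-traces of the $(Y^*)^+$-vicinities of $y$; or $f(\mathcal{U})$ is free, in which case $F(p) = f(\mathcal{U}) \in Y'$ and the $Y$-traces of its $(Y^*)^+$-vicinities are just the members of $f(\mathcal{U})$. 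In both cases the continuity condition you isolated holds verbatim; this two-case definition and check is exactly the paper's proof. Filling in this one observation turns your outline into a complete argument.
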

  
\begin{proof}
For each free ultrafilter $\U$ on $X$, $f(\U)$ is an ultrafilter on $Y$. Define $F(\U)$ as follows:
\begin{itemize}
\item If $f(\U) \to_\tau y$ for some $y \in Y$, let $F(\U) = y$.
\item If $f(\U)$ is free in $(Y, \tau)$, let $F(\U) = f(\U)$.
\end{itemize}

We show that $F$ is continuous.  Since $f$ is continuous, if $x \in X$ and $F(x) \in \V_{\kappa\pi}(f(x)) = \V_\pi(f(x))$, then we can find $U \in \V_{\pi}(x)$ such that $f[U] \subseteq V$.  Suppose $\U \in X'$.  If $F(\U) \in Y$, let $V \in \mathcal{V}_\pi(F(\U))$.  Since $f(\U) \to_\tau y$, $V \in f(\U)$.  Therefore, for some $U \in \U$, $f(U) \subseteq V$.  It follows that $F[\{\U\} \cup U] \subseteq V$.  Lastly, suppose that $F(\U) \in Y'$ and fix $V \in F(\U) = f(\U)$.  Then, for some $U \in \U$, $f[U] \subseteq V$.  So, $F[\{U\} \cup U] \subseteq \{F(\U)\} \cup V$ and $F$ is continuous.
\end{proof} 

In \cite{Kent Richardson 1979}, it is shown that a convergence $X$ has a projective maximum compactification if and only if $X$ has only finitely many free ultrafilters.  In contrast with this, we have the following corollaries:

\begin{corollary} If $X$ is a pretopological space, $Y$ is a compact pretopological space and $f:X \to Y$ is continuous, then there exists a continuous map $F:\kappa_\pi X \to Y$ extending $f$.\end{corollary}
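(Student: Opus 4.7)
The plan is to invoke Theorem 4.24 directly and then use compactness of $Y$ to observe that the resulting extension actually lands inside $Y$ regarded as a subspace of $\kappa_\tau Y$.

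First, apply Theorem 4.24 to the continuous function $f:(X,\pi)\to(Y,\tau)$ to obtain a continuous map $F:\kappa_\pi X \to \kappa_\tau Y$ extending $f$. Recall from the proof of Theorem 4.24 that $F$ is defined on free ultrafilters $\mathcal{U}\in (X^*)^+\setminus X$ by $F(\mathcal{U})=y$ if the pushforward ultrafilter $f(\mathcal{U})$ converges to $y\in Y$, and by $F(\mathcal{U})=f(\mathcal{U})$ (viewed as a point of $Y^*\setminus Y$) otherwise.

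Next, exploit compactness of $(Y,\tau)$. Since $Y$ is a compact Hausdorff pretopological space, every ultrafilter on $Y$ converges to a unique point. Hence for every free ultrafilter $\mathcal{U}$ on $X$, the ultrafilter $f(\mathcal{U})$ on $Y$ has a unique $\tau$-limit $y_\mathcal{U}\in Y$, and the second clause in the definition of $F$ never applies. Therefore $F(\mathcal{U})=y_\mathcal{U}\in Y$ for every $\mathcal{U}$, and $F$ maps $\kappa_\pi X$ into $Y\subseteq\kappa_\tau Y$. We may thus co-restrict and regard $F$ as a function $\kappa_\pi X\to Y$ that extends $f$.

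It remains to check that this co-restricted function is continuous when $Y$ carries its original pretopology $\tau$. Since $\kappa_\tau Y$ is by construction an extension of $(Y,\tau)$, the subspace pretopology on $Y$ inherited from $\kappa_\tau Y=(Y^*)^+$ coincides with $\tau$; indeed, for any $y\in Y$, the vicinities of $y$ in $\kappa_\tau Y$ trace on $Y$ to give back $\mathcal{V}_\tau(y)$, as is immediate from the defining formulas for $Y^*$ and for the simple extension $(-)^+$. Because $F:\kappa_\pi X\to\kappa_\tau Y$ is continuous and has image contained in $Y$, continuity of $F:\kappa_\pi X\to(Y,\tau)$ is automatic. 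The only place requiring a little care is this last subspace-pretopology identification; everything else is an immediate application of Theorem 4.24 combined with the equivalence (discussed for pretopological spaces) between compactness and convergence of every ultrafilter.
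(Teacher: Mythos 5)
Your argument is correct and is essentially the paper's intended one: the paper offers no separate proof of this corollary, treating it as immediate from the preceding extension theorem exactly as you do---compactness of $Y$ forces every pushforward ultrafilter $f(\mathcal{U})$ to converge (for an ultrafilter, adherence and limit coincide, so compactness gives convergence), so the second clause in the definition of $F$ never fires and $F$ corestricts to $Y$, whose subspace pretopology in $\kappa_\tau Y$ is $\tau$. Your care about the trace of the vicinities of $y\in Y$ in $(Y^*)^+$ is warranted and handled correctly.
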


\begin{corollary} If $X$ is a pretopological space, then $\kappa_\pi X \geq Y$ for any compactification $Y$ of $X$.\end{corollary}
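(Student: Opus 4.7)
The plan is to reduce the statement directly to Corollary 4.25. First I would unpack what it means for $Y$ to be a compactification of $X$: by the definitions in Section 4.2, $Y$ is an extension of $X$, that is, a pretopological space containing $X$ as a subspace with $adh_Y X = Y$; in particular the inclusion $i : X \hookrightarrow Y$ is continuous. Since $Y$ is compact by hypothesis, the situation is exactly the one covered by Corollary 4.25.

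Applying Corollary 4.25 to the continuous map $i : X \to Y$ then produces a continuous map $F : \kappa_\pi X \to Y$ extending $i$. Saying that $F$ extends the inclusion is precisely saying that $F(x) = i(x) = x$ for every $x \in X$, so $F$ fixes the points of $X$. By the definition of the projective order on extensions given at the start of Section 4.2, this is exactly the assertion $\kappa_\pi X \geq_X Y$, which is the desired conclusion.

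No substantive obstacle is anticipated, since the genuine work has already been done upstream: Theorem 4.24 builds the continuous extension to $\kappa_\tau Y$, and Corollary 4.25 observes that when $Y$ is itself compact this lands in $Y$. The present statement is then a one-line consequence, obtained simply by feeding the inclusion $X \hookrightarrow Y$ into that machine. The only thing worth double-checking while writing up the proof is that compactification in the sense of Section 4.2 indeed satisfies the hypotheses of Corollary 4.25 (compact, pretopological, containing $X$ as a dense subspace), but this is immediate from the definitions already recorded.
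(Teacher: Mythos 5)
Your proposal is correct and is exactly the intended argument: the paper states this corollary without proof precisely because it follows by feeding the (continuous) inclusion $X \hookrightarrow Y$ into the preceding corollary and noting that the resulting map $F:\kappa_\pi X \to Y$ fixes $X$, which is the definition of $\kappa_\pi X \geq_X Y$. No issues.
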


\bibliographystyle{plain}

\end{document}